\colorlet{shadecolor}{orange!15}
\theoremstyle{definition}
\newtheorem{teo}{Theorem}[section] 
\newtheorem{cor}[teo]{Corollary} 
\newtheorem{prop}[teo]{Proposition} 
\newtheorem{defn}[teo]{Definition} 
\newtheorem{ex}[teo]{Example}
\tikzset{onslide/.code args={<#1>#2}{%
  \only<#1>{\pgfkeysalso{#2}} 
}}
\tikzset{temporal/.code args={<#1>#2#3#4}{%
  \temporal<#1>{\pgfkeysalso{#2}}{\pgfkeysalso{#3}}{\pgfkeysalso{#4}} 
the path
}}
  \tikzset{
    invisible/.style={opacity=0},
    visible on/.style={alt={#1{}{invisible}}},
    alt/.code args={<#1>#2#3}{%
      \alt<#1>{\pgfkeysalso{#2}}{\pgfkeysalso{#3}} 
    },
  }
\tikzstyle{highlight}=[red,ultra thick]
\tikzstyle{arrow} = [thick,->,>=stealth]
\tikzset{onslide/.code args={<#1>#2}{%
  \only<#1>{\pgfkeysalso{#2}} 
}}
\tikzset{temporal/.code args={<#1>#2#3#4}{%
  \temporal<#1>{\pgfkeysalso{#2}}{\pgfkeysalso{#3}}{\pgfkeysalso{#4}} 
the path
}}
\tikzstyle{highlight}=[red,ultra thick]
\tikzstyle{tp1} = [rectangle, rounded corners, minimum width=2cm, minimum height=1cm,text 
\tikzstyle{tp2} = [rectangle, rounded corners, minimum width=2cm, 
\tikzstyle{tp3} = [rectangle, rounded corners, minimum width=2cm, minimum height=1cm, text 
\tikzstyle{tp4} = [rectangle, rounded corners, minimum width=2cm, minimum height=1cm, text 
\tikzstyle{tp5} = [rectangle, rounded corners, minimum width=4cm, minimum height=1cm, text 
\tikzstyle{arrow} = [thick,->,>=stealth]
\begin{document}

\title{Non Reduced Theory of Quadratic Forms Over Rings}
\author{Kaique Matias de Andrade Roberto \& Hugo Luiz Mariano}
\date{}

\maketitle

\tableofcontents

\thispagestyle{empty}


\begin{abstract}
 Based on the works of M. Marshall on multirings, we propose the fundamentals for a \textbf{non reduced}
abstract quadratic forms theory in general coefficients on rings, with the machinery of multirings and multifields.
\end{abstract}

\section{Introduction}

Quadratic forms over fields (of characteristc not 2) is a large and succesfull subject of study in mathematics. The book 
\cite{lam2005introduction} covers almost all introductory aspects of the theory as been as \cite{lam1983orderings} covers the 
relationship between orderings, valuations and quadratic forms, in the theory that will be known as Reduced Theory of Quadratic 
Forms. My master degree thesis \cite{roberto2019multiringchamber} 
establishes a path between the both classic and reduced theory with the abstract ones.

However, the generalization of the theory to general coefficients in rings is a hard step. The book \cite{knus1991quadratic} 
cover some basic aspects in the most general setting possible, and we have Marshall's theory of abstract 
real spectrum (\cite{marshall1996spaces}) and its algebraic counterpart, the realsemigroups of Dickmann and Petrovich 
(\cite{dickmann2004real}) given a nice approach for the \textbf{reduced} theory of quadratic forms on rings, but, most of the 
relevant aspects of quadratic forms, like Witt rings, Pfister forms and etc, are 
uncovered.

Based on the works of M. Marshall in the paper \cite{marshall2006real}, we propose the fundamentals for a \textbf{non reduced}
abstract quadratic forms theory in general coefficients on rings, with the machinery of multirings and multifields.

\section{Connections Between ARS, Realsemigroups and Multirings}

\begin{defn}[Abstract Real Spectra]\label{defn:ars}
 An \textit{abstract real spectrum or space of signs}\index{space of signs}, abreviatted to ARS, is 
a pair $(X,G)$ satisfying:
 \begin{description}
  \item [AX1 -] $X$ is a non-empty set, $G$ is a submonoid of $\{-1,0,1\}^X$, $G$ contais the constants 
functions $-1,0,1$, and $G$ separates points in $X$. 
 \end{description}

 If $a,b\in G$, the \textit{value set} $D(a,b)$ is defined to be the set of all $c\in G$ such that, for 
all $x\in X$, either $a(x)c(x)>0$ or $b(x)c(x)>0$ or $c(x)=0$. The \textit{value set} $D^t(a,b)$ is 
defined to be the set of all $c\in G$ such that, for all $x\in X$, either $a(x)c(x)>0$ or $b(x)c(x)>0$ 
or $c(x)=0$ and $b(x)=-a(x)$. Note that $c\in D^t(a,b)\Rightarrow c\in D(a,b)$. Conversely, $c\in 
D(a,b)\Rightarrow c\in D^t(ac^2,bc^2)$.

\begin{description}
 \item [AX2 -] If $P$ is a submonoid of $G$ satisfying $P\cup-P=G$, $-1\notin P$, $a,b\in P\Rightarrow 
D(a,b)\subseteq P$ and $ab\in P\cap-P\Rightarrow a\in P\cap-P$ or $b\in P\cap-P$, then there exists 
$x\in X$ (necessarily unique) such that $P=\{a\in G:a(x)\le0\}$.

\item [AX3a (Weak Associativity) -] For all $a,b,c\in G$, if $p\in D(a,r)$ for some $q\in D(b,c)$ 
then $p\in D(r,c)$ for some $r\in D(a,b)$.

\item [AX3b -] For all $a,b\in G$, $D^t(a,b)\ne\emptyset$.
\end{description}

We hasten to point out that AX3a and AX3b combined are equivalent to the simgle axiom AX3 below:

\begin{description}
\item [AX3 (Strong Associativity) -] For all $a,b,c\in G$, if $p\in D^t(a,r)$ for some $q\in D^t(b,c)$ 
then $p\in D^t(r,c)$ for some $r\in D^t(a,b)$.
\end{description}
\end{defn}

\begin{defn}\label{I.1.1rs}
A \textit{ternary semigroup} (abbreviated TS) is a struture $(S,\cdot,1,0,-1)$ with individual 
constants $1,,0,-1$ and a binary operation ``$\cdot$'' such that:
\begin{description}
 \item [TS1 -] $(S,\cdot,1)$ is a commutative semigroup with unity;
 \item [TS2 -] $x^3=x$ for all $x\in S$;
 \item [TS3 -] $-1\ne1$ and $(-1)(-1)=1$;
 \item [TS4 -] $x\cdot0=0$ for all $x\in S$;
 \item [TS5 -] For all $x\in S$, $x=-1\cdot x\Rightarrow x=0$.
\end{description}

We shall write $-x$ for $(-1)\cdot x$. The semigroup verifying conditions [TS1] and [TS2] (no extra 
constants) will be called \textit{3-semigroups}. 
\end{defn}

Here, we will enrich the language $\{\cdot,1,0,-1\}$ with a ternary relation $D$. In agreement with 
\ref{defn:ars}, we shall write $a\in D(b,c)$ instead of $D(a,b,c)$. We also set:
\begin{align*}
 \tag{trans}a\in D^t(b,c)\Leftrightarrow a\in D(b,c)\wedge-b\in D(-a,c)\wedge -c\in D(b,-a).
\end{align*}
The relations $D$ and $D^t$ are called \textit{representation} and \textit{transversal representation} 
respectivel.

\begin{defn}\label{I.2.2rs}
 A \textit{real semigroup} (abbreviated RS) is a ternary semigroup $(G,1,0,-1)$ together with a ternary 
relation $D$ satisfying:
\begin{description}
 \item [RS0 -] $c\in D(a,b)$ if and only if $c\in D(b,a)$.
 \item [RS1 -] $a\in D(a,b)$.
 \item [RS2 -] $a\in D(b,c)$ implies $ad\in D(bd,cd)$.
 \item [RS3 (Strong Associativity) -] If $a\in D^t(b,c)$ and $c\in D^t(d,e)$, then there exists $x\in 
D^t(b,d)$ such that $a\in D^t(x,e)$.
 \item [RS4 -] $e\in D(c^2a,d^2b)$ implies $e\in D(a,b)$.
 \item [RS5 -] If $ad=bd$, $ae=be$ and $c\in D(d,e)$, then $ac=bc$.
 \item [RS6 -] $c\in D(a,b)$ implies $c\in D^t(c^2a,c^2b)$.
 \item [RS7 (Reduction) -] $D^t(a,-b)\cap D^t(b,-a)\ne\empty$ implies $a=b$.
 \item [RS8 -] $a\in D(b,c)$ implies $a^2\in D(b^2,c^2)$.
\end{description}
A \textit{pre-real semigroup}\index{pre-real semigroup}(abbreviated PRS) is a ternary semigroup $(G,1,0,-1)$ together with a 
ternary 
relation $D$ satisfying [RS0]-[RS6], [RS8] and
\begin{description}
 \item [RS7' -] $x\in D^t(0,a)\Leftrightarrow x=a$.
\end{description}
\end{defn}

\begin{defn}\label{I.2.2rs2}
 A \textit{pre real semigroup} (abbreviated PRS) is a ternary semigroup $(G,1,0,-1)$ together with a ternary 
relation $D^t$ satisfying:
\begin{description}
 \item [DT0 -] $a\in D^t(b,c)$ if and only if $a\in D^t(c,b)$.
 \item [DT1 -] $a\in D^t(b,c)$ implies $-b\in D^t(-a,c)$.
 \item [DT2 -] $1\in D^t(1,a)$ for all $a\in G$.
 \item [DT3 -] $a\in D^t(b,c)$ implies $ad\in D^t(bd,cd)$.
 \item [DT4 (Strong Associativity) -] If $a\in D^t(b,c)$ and $c\in D^t(d,e)$, then there exists $x\in 
D^t(b,d)$ such that $a\in D^t(x,e)$.
 \item [DT5 -] If $ad=bd$, $ae=be$ and $c\in D^t(c^2d,c^2e)$, then $ac=bc$.
 \item [DT6 -] $e\in D^t(c^2e^2a,d^2e^2b)$ implies $e\in D^t(e^2a,e^2b)$.
 \item [DT7 -] $c\in D(a,b)$ implies $c\in D^t(c^2a,c^2b)$.
 \item [DT8 -] $a\in D^t(a^2b,a^2c)$ implies $a^2\in D^t((ab)^2,(ac)^2)$.
 \item [DT9 -] $x\in D^t(0,a)\Leftrightarrow x=a$.
\end{description}
A \textit{real semigroup} (abbreviated RS) is a pre-real semigroup satisfying
\begin{description}
 \item [DT10 (Reduction) -] $D^t(a,-b)\cap D^t(b,-a)\ne\emptyset$ implies $a=b$.
\end{description}
\end{defn}

\begin{teo}[The Duality Theorem]\label{4.1dickmann}
 There is a functorial duality between the category $\mathcal{RS}$ of real semigroups with RS-morphisms and $\mathcal{ARS}$ of 
abstract real spectra with ARS-morphisms. Moreover, the duality establishes an equivalence between the categories $\mathcal{RS}$ 
and $\mathcal{ARS}^{op}$, the opposite category of $\mathcal{ARS}$.
\end{teo}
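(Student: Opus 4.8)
\begin{dem}[Proof sketch]
The plan is to realise the duality through a ``spectrum'' functor and a ``global sections'' functor, in the spirit of Stone duality and of Marshall's duality between spaces of orderings and reduced special groups. Fix the canonical three-element real semigroup $\mathbf 3=\{-1,0,1\}$, with its usual multiplication and with $D,D^t$ the representation relations of the one-point ARS. To a real semigroup $G$ assign $\mathrm{Sp}(G):=(X_G,\widehat G)$, where $X_G:=\mathrm{Hom}_{\mathcal{RS}}(G,\mathbf 3)$ is the set of RS-characters and $\widehat G:=\{\widehat a:a\in G\}\subseteq\{-1,0,1\}^{X_G}$ with $\widehat a(x):=x(a)$; to an ARS $(X,G)$ assign $\Gamma(X,G):=G$ equipped with the relation $D^t$ read off pointwise over $X$ as in Definition~\ref{defn:ars}. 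On arrows, an RS-morphism $f\colon G\to H$ is sent to $f^{*}\colon X_H\to X_G$, $x\mapsto x\circ f$, which is readily an ARS-morphism, and an ARS-morphism $(X_1,G_1)\to(X_2,G_2)$ is sent to the induced map on function parts. Both assignments reverse composition, so after transposing to the opposite category we obtain covariant functors $\mathrm{Sp}\colon\mathcal{RS}\to\mathcal{ARS}^{op}$ and $\Gamma\colon\mathcal{ARS}^{op}\to\mathcal{RS}$.

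First I would check that the two constructions land where claimed. That $\Gamma(X,G)$ is a real semigroup amounts to a pointwise verification of [RS0]--[RS8] (equivalently [DT0]--[DT10]) for $\{-1,0,1\}$-valued functions: [RS0]--[RS2], [RS4], [RS5], [RS8] are immediate; [RS3] is precisely the strong associativity AX3, i.e.\ AX3a and AX3b combined; [RS6] is the remark following Definition~\ref{defn:ars} that $c\in D(a,b)\Rightarrow c\in D^t(c^2a,c^2b)$; and [RS7] (Reduction) is exactly what AX2 yields, using that $G$ separates the points of $X$. Dually, that $\mathrm{Sp}(G)$ is an ARS needs AX1 (immediate: $\widehat G$ is by construction a submonoid of $\{-1,0,1\}^{X_G}$ containing the constants and separating $X_G$), AX3 (inherited pointwise from AX3 in $\mathbf 3$), and AX2, which is the substantive point and is intertwined with the next step.

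The heart of the proof, and the step I expect to be the main obstacle, is the \textbf{representation (separation) theorem} for real semigroups: for all $a,b,c\in G$,
\begin{equation*}
c\in D^t_G(a,b)\quad\Longleftrightarrow\quad x(c)\in D^t_{\mathbf 3}\bigl(x(a),x(b)\bigr)\ \text{ for every }x\in X_G,
\end{equation*}
and similarly for $D$. The implication ``$\Rightarrow$'' is formal, since RS-characters preserve $D^t$. For ``$\Leftarrow$'' I would argue by contraposition: given $c\notin D^t_G(a,b)$, use Zorn's lemma to produce a subset $P\subseteq G$, maximal among those closed under multiplication, satisfying $P\cup-P=G$, $-1\notin P$, $D(u,v)\subseteq P$ for $u,v\in P$ and the primality clause of AX2, and carrying a witness to the failure; here [RS7] and [RS8] are used to secure the extra transversality conditions built into $D^t$. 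By AX2 (established for $\mathrm{Sp}(G)$ by the very same Zorn argument) such a $P$ is a point $x\in X_G$, which by construction witnesses $x(c)\notin D^t_{\mathbf 3}(x(a),x(b))$. This is the analogue of Marshall's separation theorem, and is where [RS7] (reduction) is genuinely needed. Granting it, the evaluation $a\mapsto\widehat a$ is an RS-morphism $G\to\Gamma(\mathrm{Sp}(G))$ that is injective (characters separate), onto $\widehat G$ by construction, and reflects $D^t$; hence it is an RS-isomorphism, natural in $G$.

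Finally, for an ARS $(X,G)$ define $\eta_{(X,G)}\colon X\to X_G$ by $x\mapsto\mathrm{ev}_x$ with $\mathrm{ev}_x(a):=a(x)$. Each $\mathrm{ev}_x$ is an RS-character, since $D^t$ on $\Gamma(X,G)$ is defined pointwise over $X$, so $\eta$ is well defined; it is injective because $G$ separates the points of $X$ (AX1); and it is surjective precisely by AX2, for an RS-character $\sigma\colon G\to\mathbf 3$ determines a subset $P_\sigma\subseteq G$ (its ``non-negative part'') satisfying the hypotheses of AX2, and the unique point $x$ furnished by AX2 satisfies $\mathrm{ev}_x=\sigma$. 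A routine check shows that $\eta$ and $\eta^{-1}$ are ARS-morphisms, so $\eta_{(X,G)}$ is an isomorphism in $\mathcal{ARS}$, natural in $(X,G)$. Thus the unit and counit of the pair $(\mathrm{Sp},\Gamma)$ are natural isomorphisms, which yields the equivalence $\mathcal{RS}\simeq\mathcal{ARS}^{op}$ and, in particular, the asserted functorial duality.
\end{dem}
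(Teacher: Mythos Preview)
The paper does not supply its own proof of this theorem: it is stated without argument and attributed to Dickmann--Petrovich (note the label \texttt{4.1dickmann} and the surrounding citations of \cite{dickmann2004real}); the subsequent material passes through real reduced multirings rather than proving the $\mathcal{RS}$--$\mathcal{ARS}$ duality directly. So there is no ``paper's proof'' to compare your proposal against.

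That said, your sketch is essentially the standard Dickmann--Petrovich argument and is structurally sound: the spectrum/global-sections pair, the pointwise verification of the RS axioms for $\Gamma(X,G)$, the separation theorem for $D^t$ via a Zorn argument producing a prime cone, and the use of AX2 to identify RS-characters with points of $X$ are exactly the ingredients used in \cite{dickmann2004real}. Two small points of precision: your derivation of [RS7] for $\Gamma(X,G)$ really uses AX1 (that $G$ separates the points of $X$) together with the pointwise fact in $\mathbf 3$, not AX2; and in the separation step you should be explicit that the ``prime cone'' $P$ you build is shown to be an RS-character (i.e.\ that the associated sign map preserves $D$ and $D^t$), which is where the axioms [RS3]--[RS6] and [RS8] do their work. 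With those clarifications your outline matches the literature proof that the paper is quoting.
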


Let $A$ be a multiring with $-1 \notin \sum A^2$ and consider $T$ a preorder. We denote by $Q_T (A)$ the image of $A$ in $Q_2 ^ 
{X_T}$, where $X_T = \{P \in \mbox{Sper}(A): T \subseteq P\}$.

Denote the image of $A$ in $Q_2^{X_T}$ by $Q_T(A)$. Addition on $Q_T(A)$ is defined by
$\overline a+\overline b:=\{\overline c:c\in a+b\}$, $\overline a\overline b:=\overline{ab}$, 
$-\overline{a}:=\overline{-a}$. The zero element of $Q_T(A)$ is $\overline{0}$.

\begin{prop}[Local-Global principle]\label{prop:7.3marshall}
 Let $A$ be a multiring with $-1\notin\sum A^2$ and $T$ a proper preordering of $A$. Then:
 \begin{enumerate}
  \item $Q_T(A)$ is a multiring.
  \item $Q_T(A)$ is strong embedded in $Q_2^{X_T}$.
 \end{enumerate}
\end{prop}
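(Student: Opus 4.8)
The plan is to derive both items from a single \emph{local--global equivalence}: for all $a,b,c\in A$,
\begin{equation*}
\overline c\in\overline a+\overline b\ \text{in}\ Q_2^{X_T}\quad\Longleftrightarrow\quad\exists\,c'\in A\ \text{with}\ c'\in a+b\ \text{and}\ \overline{c'}=\overline c ,
\end{equation*}
where $\overline a\in Q_2^{X_T}$ is the image of $a$, i.e. $\overline a(\alpha)=\alpha(a)$ for $\alpha\in X_T$, each point $\alpha$ being identified with the corresponding multiring morphism $A\to Q_2$. Granting this, the argument runs as follows. Multiplication and negation on $Q_T(A)$ are well defined at once, since $\overline{ab}=\overline a\,\overline b$ and $\overline{-a}=-\overline a$ hold pointwise. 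The displayed equivalence says precisely that the pushforward addition $\overline a+\overline b:=\{\overline c:c\in a+b\}$ does not depend on the chosen representatives and coincides with the intersection of $Q_T(A)$ with the sum $\overline a+\overline b$ computed in $Q_2^{X_T}$. Hence $Q_T(A)$, which contains $\overline 0,\overline 1,\overline{-1}$ and is closed under the operations of the multiring $Q_2^{X_T}$ (a product of copies of the multifield $Q_2$), is a sub-multiring of $Q_2^{X_T}$; this is item (1). And the inclusion $Q_T(A)\hookrightarrow Q_2^{X_T}$ is injective by construction and reflects the sum relation by the same equivalence, i.e. it is a strong embedding; this is item (2).

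So everything reduces to the displayed equivalence. The implication ``$\Leftarrow$'' is mere functoriality: if $c\in a+b$ in $A$, then $\alpha(c)\in\alpha(a)+\alpha(b)$ in $Q_2$ for every $\alpha\in X_T$, since $\alpha$ is a multiring morphism, which is exactly $\overline c\in\overline a+\overline b$ in $Q_2^{X_T}$. For the substantive implication ``$\Rightarrow$'' I would first unwind the hypothesis pointwise using the addition table of $Q_2$: writing $a_\alpha=\alpha(a)$, etc., one gets $a_\alpha=-b_\alpha$ at every $\alpha$ with $c_\alpha=0$, and $a_\alpha c_\alpha>0$ or $b_\alpha c_\alpha>0$ (so $c_\alpha\in\{a_\alpha,b_\alpha\}$) at every $\alpha$ with $c_\alpha\neq0$. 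Next I would partition $X_T$ into the constructible pieces on which the triple $(\overline a,\overline b,\overline c)$ is constant, and on each piece use the abstract Positivstellensatz (representation theorem) for proper preorderings of multirings — an element is nonnegative at all points of $X_T$ exactly when it lies in the $T$-saturation — to manufacture ``cut-off'' elements $t_1,t_2\in T$ for which a suitable $c'\in t_1a+t_2b$ has the sign pattern of $\overline c$. Finally I would pass from $t_1a+t_2b$ to $a+b$ itself by absorbing the (square-like) factors $t_i$, invoking the real-semigroup axioms made available by the Duality Theorem~\ref{4.1dickmann} — in particular the analogues of \textbf{RS2}, \textbf{RS4}, \textbf{RS6}, \textbf{RS8}, which recover $D(a,b)$ from $D(c^2a,d^2b)$ and control squares under the operations — together with spectral compactness of $X_T$, which cuts the full preordering down to finitely many $t_i$.

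The main obstacle will be exactly this ``$\Rightarrow$'' direction, and within it the \emph{global} step. Pointwise the statement is nothing more than the $Q_2$ addition table, but one must produce a \emph{single} element $c'\in A$ whose image agrees with $\overline c$ simultaneously at \emph{all} orderings of $X_T$. Since the sign loci $\{\alpha\in X_T:\overline c(\alpha)=1\}$, $\{\alpha:\overline c(\alpha)=-1\}$ and $\{\alpha:\overline c(\alpha)=0\}$ are only constructible — not open — a naive gluing/partition-of-unity argument is unavailable, and the patching has to be routed through the abstract Positivstellensatz and the separation axiom \textbf{AX2} (equivalently, a Zorn's-lemma argument producing the orderings one needs), using compactness of $X_T$ in the spectral topology. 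A secondary technical point is that $A\to Q_2^{X_T}$ need not be injective, so one should work throughout with $Q_T(A)$ as the quotient of $A$ by the congruence $a\sim b\Leftrightarrow\overline a=\overline b$; and checking that this congruence is compatible with the pushforward addition $\{\overline c:c\in a+b\}$ is, once more, exactly the displayed local--global equivalence.
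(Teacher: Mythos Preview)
The paper does not supply its own proof of this proposition; it is quoted verbatim from Marshall's paper \cite{marshall2006real} (note the label \texttt{prop:7.3marshall}), so there is no in-paper argument to compare against.

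On the merits of your sketch: the reduction of both items to the single local--global equivalence is correct and is exactly the heart of the matter, and your ``$\Leftarrow$'' direction is fine. The gap is in your plan for ``$\Rightarrow$''. You propose to (i) manufacture, via the abstract Positivstellensatz, some $c'\in t_1a+t_2b$ in $A$ with $t_1,t_2\in T$ and the correct sign pattern, and then (ii) ``absorb'' the $t_i$ using the RS axioms supplied by the Duality Theorem~\ref{4.1dickmann}. Step (ii) does not go through. The Duality Theorem concerns structures already known to be real semigroups (equivalently, real reduced multirings), whereas here $A$ is an arbitrary multiring and $Q_T(A)$ is precisely the object whose multiring structure you are trying to establish; invoking RS2/RS4/RS6/RS8 for it is circular. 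If instead you apply those axioms in $Q_2^{X_T}$ (which \emph{is} real reduced), you only obtain relations among the images $\overline{t_1a},\overline{t_2b},\overline{c'}$ in $Q_2^{X_T}$ --- you do not get a lift to an element of $a+b$ inside $A$. In a general multiring there is simply no passage from $c'\in t_1a+t_2b$ to some $c''\in a+b$ with $\overline{c''}=\overline{c'}$: asserting one exists is a restatement of the very implication you are proving, with $(t_1a,t_2b)$ in place of $(a,b)$.

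Marshall's actual argument avoids this detour. It first establishes a Positivstellensatz/representation theorem for proper preorderings on multirings (this is the result immediately preceding 7.3 in \cite{marshall2006real}), giving an \emph{algebraic} description in terms of $T$ of the congruence $\overline a=\overline b$ and of the sign conditions $\overline a\ge 0$, and then builds the required witness $c'\in a+b$ directly from that description, by hand, using only the multiring axioms for $A$ and closure properties of $T$. No appeal to the RS/ARS duality is made (or available) at that stage --- the duality is logically downstream of results like this one. Your instinct that the Positivstellensatz is the key input is right; what is missing is the concrete construction of $c'$ inside $a+b$ from it, rather than inside $t_1a+t_2b$.
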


We denote $Q_{\sum A^2}(A)$ by $Q_{red}(A)$ which we refer to as the \textit{real reduced multiring} associated to $A$.

\begin{prop}\label{prop:7.5marshall}
 For a multiring $A$ with $-1\notin\sum A^2$, the map $a\mapsto\overline a$ from $A$ onto $Q_{red}(A)$ 
is an isomorphism if and only if $A$ satisfies the following properties:
\begin{enumerate}[a -]
 \item $a^3=a$.
 \item $a+ab^2=\{a\}$.
 \item $a^2+b^2$ contains a unique element.
\end{enumerate}
\end{prop}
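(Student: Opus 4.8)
The statement is an "if and only if," so I would split into two directions. For the forward direction, suppose the natural surjection $\varphi\colon A \to Q_{red}(A)$, $a\mapsto\overline a$, is an isomorphism. Since $Q_{red}(A) = Q_{\sum A^2}(A)$ lives inside $Q_2^{X_{\sum A^2}}$, and $Q_2 = \{-1,0,1\}$ is the multifield with $x^3 = x$, $x + xy^2 = \{x\}$ for all $x,y$, and $x^2 + y^2$ a singleton (these are the defining arithmetic features of $Q_2$, inherited componentwise by any subset of a power of $Q_2$), the three identities (a), (b), (c) hold in $Q_{red}(A)$ verbatim. Transporting them back along the isomorphism $\varphi$ gives (a), (b), (c) in $A$. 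This direction is essentially a matter of checking that each identity is a first-order sentence in the multiring language preserved under isomorphism, and that $Q_2$ — hence every sub-multiring of a power of $Q_2$, in particular $Q_{red}(A)$ by Proposition \ref{prop:7.3marshall} — satisfies it.

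For the converse, assume $A$ satisfies (a), (b), (c). The map $\varphi$ is already known to be a surjective multiring morphism, so the content is to show it is injective and that its inverse is also a morphism (i.e. that $\varphi$ reflects sums). Injectivity: suppose $\overline a = \overline b$, meaning $\sigma(a) = \sigma(b)$ for every $\sigma \in X_{\sum A^2} = \mathrm{Sper}(A)$ (signs agreeing at every point of the real spectrum containing $\sum A^2$, which is all of $\mathrm{Sper}(A)$ since $-1\notin\sum A^2$). I would use (a) to see $A$ is itself "reduced" in the relevant sense — every element is its own cube, so $A$ behaves like a space of signs at the level of elements — and then invoke the separation half of the real-spectrum machinery: if $a\neq b$ there is an ordering $P$ with, say, $a\in P$, $b\notin P$ or a prime where they differ, contradicting $\sigma(a)=\sigma(b)$ everywhere. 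Identities (b) and (c) are what force the multiplicative and additive structure of $A$ to coincide with the reduced one: (b) kills the "square factors," collapsing $A$ onto its image, and (c) guarantees that sums of two squares — hence, via the representation relation, all the value sets $D(a,b)$ — are computed the same way in $A$ as in $Q_{red}(A)$. Concretely I would show $a + b$ in $A$ maps bijectively onto $\overline a + \overline b$ in $Q_{red}(A)$ by checking $c \in a+b \iff \overline c \in \overline a + \overline b$, using (c) to pin down the diagonal cases and (b) to handle the scaling.

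The main obstacle is the converse, and within it the verification that $\varphi$ reflects the addition relation, i.e. that $\overline c \in \overline a + \overline b$ in $Q_{red}(A)$ implies $c \in a + b$ in $A$. This is where one genuinely needs the local–global principle (Proposition \ref{prop:7.3marshall}, part 2: the strong embedding of $Q_T(A)$ into $Q_2^{X_T}$) together with all three hypotheses at once — (a) to stay inside the ternary-semigroup world, (b) to normalize representatives, and (c) to control binary sums of squares, from which general representation follows by the usual Pfister-style manipulations available in $Q_2$. I expect the bookkeeping to mirror Marshall's argument in \cite{marshall2006real}; the new point here is only that we are doing it for multirings rather than rings, so I would be careful that every step uses only the multiring axioms and the three listed identities, never cancellation or any field-specific fact.
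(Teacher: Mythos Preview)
The paper does not supply its own proof of this proposition; it is quoted from Marshall's \cite{marshall2006real} (the label is literally \texttt{prop:7.5marshall}) and stated without argument, so there is no in-paper proof to compare against. Your forward direction is exactly the standard one: the three identities hold in $Q_2$, hence componentwise in $Q_2^{X}$, hence in $Q_{red}(A)$ by Proposition~\ref{prop:7.3marshall}, and isomorphisms transport them back to $A$.

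For the converse your decomposition (injectivity plus reflection of sums, via the local--global principle) is the right shape, but one step needs tightening. Your injectivity sketch risks circularity: ``if $a\neq b$ there is an ordering separating them'' is precisely the statement that $a\mapsto\overline a$ is injective, so you cannot simply \emph{invoke} separation --- you must \emph{derive} it from (a), (b), (c). The standard route is to first use (c) to collapse $\sum A^2$ to $\{x^2:x\in A\}$ (a sum of two squares is already a square; induct), then use (b) to see that $a\in\sum A^2\cap -\sum A^2$ forces $a=0$, and only then run a maximal-preordering / Zorn argument to manufacture, for each $a\neq b$, an ordering distinguishing them. Once that is in place, reflection of sums follows as you indicate: conditions (a)--(c) let you rewrite $c\in a+b$ as a pointwise sign condition over $\mathrm{Sper}(A)$, which is exactly $\overline c\in\overline a+\overline b$ by the strong embedding of Proposition~\ref{prop:7.3marshall}. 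So your plan is correct once the separation step is made honest rather than assumed.
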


\begin{defn}\label{defn:mrrealreduced}
 A multiring satisfying $-1\notin\sum A^2$ and the equivalent conditions of proposition 
\ref{prop:7.5marshall} will be called \textit{real reduced multiring}. A morphism of real reduced 
multirings is just a morphism of multirings. The category of real 
reduced multirings will be denoted by $\mathcal{MR}_{red}$.
\end{defn}

\begin{cor}\label{cor:7.6marshall}
 A multiring $A$ is real reduced if and only if the following properties holds for all 
$a,b,c,d\in A$:
\begin{enumerate}[i -]
 \item $1\ne0$;
 \item $a^3=a$;
 \item $c\in a+ab^2\Rightarrow c=a$;
 \item $c\in a^2+b^2$ and $d\in a^2+b^2$ implies $c=d$.
\end{enumerate}
\end{cor}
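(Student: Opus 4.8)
The plan is to derive the corollary directly from Proposition \ref{prop:7.5marshall} by showing that the four conditions (i)--(iv) listed here are equivalent to the conjunction of ``$-1\notin\sum A^2$'' together with conditions (a)--(c) of that proposition. The forward direction is the routine one: if $A$ is real reduced, then by definition $-1\notin\sum A^2$ and the map $a\mapsto\overline a$ is an isomorphism, so (a)--(c) hold; I would then read off (i) from $1\ne 0$ in the multiring $A$ (equivalently, from $\overline 1\ne\overline 0$ in $Q_{red}(A)$, using that $-1\notin\sum A^2$ guarantees $X_{\sum A^2}\ne\emptyset$), (ii) is literally (a), (iii) is a restatement of (b) since $a+ab^2=\{a\}$ means precisely that every $c\in a+ab^2$ equals $a$, and (iv) is a restatement of (c) since $a^2+b^2$ having a unique element is the same as: any two elements $c,d$ of $a^2+b^2$ coincide (here one also needs $a^2+b^2\ne\emptyset$, which holds in any multiring).

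For the converse, assume (i)--(iv). First I would recover $-1\notin\sum A^2$: using (ii) every element is its own cube, and from (iii) and (iv) one shows by induction that any finite sum of squares $a_1^2+\cdots+a_n^2$ has a single element which, by (ii), is an idempotent-like element $e$ with $e^3=e$; if $-1$ lay in such a sum then $-1=e=e^3$ would combine with $(-1)(-1)=1$ and TS5-type reasoning (i.e. $x=-x\Rightarrow x=0$ flavored arguments, available since $A$ is a multiring, not merely from the ternary-semigroup axioms) to force $1=0$, contradicting (i). Once $-1\notin\sum A^2$ is in hand, conditions (a)--(c) of Proposition \ref{prop:7.5marshall} are exactly (ii), (iii), (iv) rewritten as set equalities (``contains a unique element'' $\leftrightarrow$ ``any two elements are equal'', and $a+ab^2=\{a\}$ $\leftrightarrow$ ``$c\in a+ab^2\Rightarrow c=a$'' once one checks $a\in a+ab^2$, which follows from $0\in b^2\cdot 0$ and the multiring axioms, or more directly from $a\in a\cdot(1+b^2\cdot 0)$-type manipulation). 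Hence $a\mapsto\overline a$ is an isomorphism, so $A$ is real reduced by Definition \ref{defn:mrrealreduced}.

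The main obstacle I anticipate is the step ``$(i)+(ii)+(iii)+(iv)\Rightarrow -1\notin\sum A^2$'': the other implications are bookkeeping, but this one requires genuinely using the structure. The delicate point is that a priori (iii) controls only sums of the shape $a+ab^2$ and (iv) only sums of two squares, so to handle an arbitrary sum of squares $\sum a_i^2$ I would argue inductively, at each stage replacing $\big(\sum_{i<n}a_i^2\big)+a_n^2$ by a two-square expression using that the partial sum is (by induction and (iv)) a single well-defined element, and that by (ii) such an element $e$ satisfies $e=e^3=e\cdot e^2$, so it is a square up to the relation $e^2=e\cdot e^2\cdot\! \dots$; concretely one shows every such $e$ is of the form $f^2$ for a suitable $f$ (take $f=e$, since $e^2=e\cdot e = e^3\cdot e^{-1}$-free reasoning gives $e^2\cdot e = e$, hence $e$ is idempotent and $e=e^2$). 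Then $e^2+a_n^2$ is a two-square sum, (iv) applies, and the induction closes. If $-1$ equals the resulting element $g$, then $g$ is an idempotent with $g=-1$, so $g^2=1=g\cdot g=(-1)(-1)$ is consistent, but $g=g^2$ gives $-1=1$, whence $1=0$ by the TS5 axiom applied in $A$ — contradicting (i). I would write this argument out carefully, as it is the only place where something can genuinely go wrong.
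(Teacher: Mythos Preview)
The paper states this corollary without proof, so there is nothing to compare against directly; I will just assess your argument.

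Your reduction to Proposition~\ref{prop:7.5marshall} is the right idea, and the forward direction and the translations (ii)$\leftrightarrow$(a), (iii)$\leftrightarrow$(b), (iv)$\leftrightarrow$(c) are fine (using that multivalued sums are nonempty). You also correctly isolate the only nontrivial point: deducing $-1\notin\sum A^2$ from (i)--(iv).

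However, your proposed argument for that step has a genuine gap. You want to show that any partial sum of squares is a single element $e$ which is \emph{idempotent}, so that it is again a square and (iv) can be reapplied. The justification you offer is ``$e^2\cdot e=e$, hence $e$ is idempotent and $e=e^2$''. But $e^2\cdot e=e^3=e$ is exactly axiom (ii) and says nothing beyond $e^3=e$; it does \emph{not} yield $e^2=e$. So the induction does not close as written. (Also, the appeal to ``TS5'' at the end is misplaced: TS5 is a ternary-semigroup axiom, not a multiring axiom. The conclusion $1=-1\Rightarrow 1=0$ is obtainable, but from (iii), not from TS5.)

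There is a much simpler route that avoids the idempotent claim entirely. From (iii) with $a=1$ you get $1+b^2=\{1\}$ for every $b$. By associativity of the multivalued sum and a straightforward induction,
\[
1+a_1^2+\cdots+a_n^2 \;=\; (1+a_1^2)+a_2^2+\cdots+a_n^2 \;=\; 1+a_2^2+\cdots+a_n^2 \;=\;\cdots\;=\;\{1\}.
\]
If $-1\in a_1^2+\cdots+a_n^2$, then $0\in 1+(-1)\subseteq 1+a_1^2+\cdots+a_n^2=\{1\}$, so $0=1$, contradicting (i). This also cleanly gives the final step you wanted: if $1=-1$ then $0\in 1+(-1)=1+1=1+1\cdot 1^2=\{1\}$ by (iii), hence $0=1$. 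With this replacement for your last paragraph, the proof goes through.
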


This implies that the morphism $a\mapsto\overline a$ from $A$ to $Q_{\mbox{red}}(A)$ is an 
isomorphism. In particular, follow by the local-global principle \ref{prop:7.3marshall} that 
for any real reduced multiring $A$, $c\in a+b\subseteq F$ if and only if, 
for every $\sigma:A\rightarrow Q_2$, $\sigma(c)\in\sigma(a)+\sigma(b)$.

\begin{prop}
   Let $A$ be a real reduced multiring. Then we have the following:
\begin{enumerate}[i -]
  \item $x\in a x^2 + b x^2$ if and only if $x \in a A^2 + b A^2$;
  \item $x\in a+b$ if and only if $x\in a x^2 + b x^2$, $-a \in b a^2 - x a^2$ and $-b\in a b^2 - x b^2$;
  \item If $ax = bx$, $ay = by$ and $z \in x z^2 + y z^2$, then $az = bz$;
  \item If $x \in a x^2 + b x^2$, then $x^2 \in a^2 x^2 + b^2 x^2$.
\end{enumerate}
\end{prop}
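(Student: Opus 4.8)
The plan is to deduce all four items from the local--global principle recorded just above the statement: identifying $A$ with $Q_{red}(A)$ and using that $Q_{red}(A)$ is strongly embedded in $Q_2^{X}$ with $X=\mathrm{Sper}(A)$, one has for all $a,b,c\in A$ that $c\in a+b$ in $A$ iff $\sigma(c)\in\sigma(a)+\sigma(b)$ in $Q_2$ for every multiring morphism $\sigma\colon A\to Q_2$, and likewise $a=b$ iff $\sigma(a)=\sigma(b)$ for every such $\sigma$. I will combine this with the standard facts that a multiring morphism fixes $0,1,-1$, preserves products exactly, satisfies $\sigma(u+v)\subseteq\sigma(u)+\sigma(v)$, and that in $Q_2=\{-1,0,1\}$ every square lies in $\{0,1\}$, every element is either $0$ or a unit of square $1$, and addition obeys $0+t=\{t\}$, $1+1=\{1\}$, $(-1)+(-1)=\{-1\}$, $1+(-1)=Q_2$.

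First I would reduce each assertion to the corresponding assertion over $Q_2$. Every term occurring in (i)--(iv) is built from $+$, $\cdot$, $-$, so applying $\sigma$ sends products to products and sums into sums, and hence by the local--global principle each sum-membership over $A$ is equivalent to its image holding in $Q_2$ for all $\sigma$, while each equation over $A$ is equivalent to its image holding for all $\sigma$. Thus ``$x\in ax^2+bx^2$'' becomes ``$\sigma(x)\in\sigma(a)\sigma(x)^2+\sigma(b)\sigma(x)^2$ for all $\sigma$'', ``$x^2\in a^2x^2+b^2x^2$'' becomes the analogous squared statement, the clauses $-a\in ba^2-xa^2$ and $-b\in ab^2-xb^2$ of (ii) become their evident $Q_2$-versions, and ``$ax=bx$'' becomes ``$\sigma(a)\sigma(x)=\sigma(b)\sigma(x)$ for all $\sigma$''. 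Since the hypotheses of (iii) and (iv) already have this ``for all $\sigma$'' shape, it suffices to prove each item with $A$ replaced by $Q_2$ and $a,b,x,y,z\in\{-1,0,1\}$. The one spot needing extra care is the existential clause $x\in aA^2+bA^2$ of (i): the implication $x\in ax^2+bx^2\Rightarrow x\in aA^2+bA^2$ is immediate since $x^2\in A^2$, and for the converse I would fix $s,t\in A$ with $x\in as^2+bt^2$, push this through each $\sigma$ to get $\sigma(x)\in\sigma(a)\sigma(s)^2+\sigma(b)\sigma(t)^2$ with $\sigma(s)^2,\sigma(t)^2\in\{0,1\}$, verify in $Q_2$ that this forces $\sigma(x)\in\sigma(a)\sigma(x)^2+\sigma(b)\sigma(x)^2$, and then appeal once more to the local--global principle to conclude $x\in ax^2+bx^2$ in $A$.

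It then remains to run the $Q_2$-check, a short case split on whether the distinguished element ($x$ in (i), (ii), (iv); $z$ in (iii)) is $0$ or a unit. When it is $0$ every assertion degenerates to $0\in\{0\}$ or $0=0$. When it is a unit its square is $1$, so in (i) and (ii) the set $ax^2+bx^2$ equals $a+b$ and all displayed conditions collapse to membership in $a+b$, read off the addition table; in (iv), $x\in a+b$ forces (by the table) $a$ or $b$ to be a unit, hence $1\in a^2+b^2$; and in (iii), $z\in x+y$ forces $x$ or $y$ to be a unit, and assuming by symmetry it is $x$, the relation $ax=bx$ gives $a=ax^2=(ax)x=(bx)x=bx^2=b$, whence $az=bz$.

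I expect the main obstacle to be conceptual rather than computational: one must recognize that convenient identities such as ``$x=0$ or $x^2=1$'' hold in $Q_2$ but \emph{not} in a general real reduced multiring, so the local--global reduction is genuinely necessary and cannot be short-circuited by a direct computation in $A$; and one must be careful that the hypotheses of the implications (iii), (iv) really do unpack as ``$\forall\sigma$'' statements so that the uniform $Q_2$-level implication can be quantified back over $A$. As orientation, note that under the relation $D(a,b):=\{c:c\in ac^2+bc^2\}$ items (iii) and (iv) are precisely the real-semigroup axioms \textbf{RS5} and \textbf{RS8} for this relation (rewriting $x^4$ as $x^2$ via $x^3=x$), while (ii) asserts that the multiring sum $a+b$ coincides with the transversal value set $D^t(a,b)$ of the same relation, matching the defining formula (trans) once one checks $-a\in D(-x,b)\Leftrightarrow -a\in ba^2-xa^2$ and $-b\in D(a,-x)\Leftrightarrow -b\in ab^2-xb^2$.
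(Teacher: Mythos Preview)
Your proposal is correct and follows precisely the route the paper indicates: the paper does not write out a detailed proof of this proposition, but immediately before it records the local--global principle (that $c\in a+b$ in a real reduced multiring iff $\sigma(c)\in\sigma(a)+\sigma(b)$ for every $\sigma:A\to Q_2$), which is exactly the reduction you perform. Your subsequent case analysis in $Q_2$ is the natural completion of that sketch, and your handling of the existential clause in item~(i) and of the ``$\forall\sigma$'' shape of the hypotheses in items~(iii)--(iv) is sound.
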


 \begin{teo}\label{teo:arstomrred}
 Let $(X,G)$ an abstract real spectra and define $a+b=\{d\in G:d\in D^t(a,b)\}$. Then 
$(G,+,\cdot,-,0,1)$ is a real reduced multiring\index{multiring!real reduced}.
\end{teo}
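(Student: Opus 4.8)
To prove the statement I would show two things: (a) that $(G,+,\cdot,-,0,1)$ is a multiring, and (b) that it satisfies conditions (i)--(iv) of Corollary~\ref{cor:7.6marshall}, so that it is a \emph{real reduced} multiring. The organizing observation is that $G\subseteq\{-1,0,1\}^X=Q_2^X$, that the multiplication, the operation $-$ and the constants are the pointwise ones, and that --- once one unwinds the definition of $D^t$ --- the multivalued sum put on $G$ is the \emph{restriction} of the sum of $Q_2^X$, i.e. $a+b=D^t(a,b)=(a+b)_{Q_2^X}\cap G$, where $Q_2^X$ carries its standard real reduced multiring structure (a product of copies of $Q_2=\{-1,0,1\}$). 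Equivalently, by the Duality Theorem~\ref{4.1dickmann} the pair $(X,G)$ makes $G$ a real semigroup, so that [TS1]--[TS5] and [RS0]--[RS8] (equivalently [DT0]--[DT10]) are all available and one may instead translate each axiom directly into a statement about $D^t$ and match it with the relevant [RS]/[DT] axiom.

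\textbf{Step 1: $(G,+,\cdot,-,0,1)$ is a multiring.} The multiplicative monoid $(G,\cdot,1)$ and the identity $0\cdot a=0$ come from [AX1] and [TS4]; $-a=(-1)a\in G$. For the additive (multigroup) part, every multigroup law \emph{except two} is inherited verbatim from $Q_2^X$: commutativity ([DT0]), $0+a=\{a\}$ ([RS7']), existence and uniqueness of $-a$, reversibility $c\in a+b\Rightarrow a\in c+(-b)$, and weak distributivity $a(b+c)\subseteq ab+ac$ ([DT3]) --- in each case the relevant element or sum is computed pointwise and the needed witness already lies in $G$. The two laws that are \emph{not} automatically inherited by an arbitrary sub-object are: (1) that $a+b\ne\emptyset$, which is exactly [AX3b]; and (2) associativity $(a+b)+c=a+(b+c)$, which is precisely the strong associativity axiom [AX3a] (equivalently [RS3]/[DT4]), applied after relabelling via [DT0] so that its existential ``there is $x\in D^t(b,d)$ with $a\in D^t(x,e)$'' matches the pattern coming from the two bracketings (the reverse inclusion then follows from the forward one together with commutativity). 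Hence $(G,+,\cdot,-,0,1)$ is a multiring.

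\textbf{Step 2: it is real reduced.} By Corollary~\ref{cor:7.6marshall} it suffices to check (i) $1\ne0$, (ii) $a^3=a$, (iii) $c\in a+ab^2\Rightarrow c=a$, (iv) $c,d\in a^2+b^2\Rightarrow c=d$. Here (i) holds because $X\ne\emptyset$ forces the constants $0$ and $1$ to differ (otherwise $-1=-0=0=1$, contradicting [TS3]); (ii) is [TS2]; and (iii),(iv) hold in $Q_2^X$ --- which is real reduced --- and pass down to the sub-multiring $G$ because, the sum of $G$ being contained in that of $Q_2^X$, their hypotheses and conclusions only involve restrictions of elements and sums of $Q_2^X$. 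Concretely, if $c\in D^t(a,ab^2)$ then, $b(x)^2\in\{0,1\}$ collapsing the defining clauses, at every $x$ one has $a(x)c(x)>0$ or $c(x)=0=a(x)$, whence $c=a$; and $c\in D^t(a^2,b^2)$ is determined pointwise by $a,b$ alone, whence $c=d$. (Directly, $-1\notin\sum G^2$ as well, since $\sum G^2\subseteq\{0,1\}^X$ while $-1$ is the constant $-1$ and $X\ne\emptyset$.) This gives the theorem.

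\textbf{Main obstacle.} The one substantive point is the associativity of the multivalued sum. It cannot be read off from the pointwise model: for a proper sub-object $G\subsetneq Q_2^X$ the two bracketings of the induced multivalued sum need not agree, so one must genuinely use [AX3a]/[RS3], and the care lies in matching the existential witness it produces to the one demanded by $(a+b)+c$ versus $a+(b+c)$, routing through [DT0]. A lesser but easy-to-overlook point is the non-emptiness $a+b\ne\emptyset$, which is exactly [AX3b]; and one should fix at the outset which of the interchangeable descriptions ([AX1]--[AX3], [RS0]--[RS8], [DT0]--[DT10]) of the same objects one is quoting at each step.
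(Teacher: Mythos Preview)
The paper does not actually supply a proof of this theorem: in Section~2 it is stated (along with Theorems~\ref{teo:mrredtoars}, \ref{teo:rstomrred}, \ref{teo:mrredtors}) as part of a summary of the known correspondence between abstract real spectra, real semigroups and real reduced multirings, with no accompanying argument. There is therefore nothing in the paper to compare your proposal against.

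That said, your proposal is correct and is the natural argument. The key structural observation --- that $D^t(a,b)$ coincides pointwise with the multivalued sum of $Q_2^X$, so that $G$ sits inside the real reduced multiring $Q_2^X$ as a sub-object for $\cdot$, $-$, $0$, $1$ and has $a+b=(a+b)_{Q_2^X}\cap G$ --- immediately reduces all axioms except non-emptiness and associativity of $+$ to pointwise checks, and those two remaining axioms are exactly what [AX3b] and strong associativity provide. Your verification of the real-reduced conditions via Corollary~\ref{cor:7.6marshall} is likewise sound.

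One small citation fix: you write ``the strong associativity axiom [AX3a]'', but in the paper's numbering [AX3a] is \emph{weak} associativity (formulated for $D$, not $D^t$); the axiom you want is [AX3] (equivalently [RS3]/[DT4], which you do cite correctly). This does not affect the argument, only the label.
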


\begin{teo}\label{teo:mrredtoars}
 Let $A$ be an real reduced multiring and consider the strong embedding $i:A\rightarrow 
Q_2^{\mbox{Sper}(A)}$ given by $i(a)=\hat a:\mbox{Sper}(A)\rightarrow Q_2$ when $\hat a(\sigma)=\sigma 
(a)$. Define $\hat A=i(A)$. Then $(\mbox{Sper}(A),\hat A)$ is an abstract real spectra. 
\end{teo}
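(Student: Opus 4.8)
The plan is to verify the axioms AX1, AX2, AX3a, AX3b of Definition~\ref{defn:ars} for $(\mathrm{Sper}(A),\hat A)$; since AX3a and AX3b together amount to AX3, I will in fact check AX1, then AX3, then AX2, the last being the substantial one. The whole argument rests on a dictionary between the multiring sum on $A$ and the transversal value set of $(\mathrm{Sper}(A),\hat A)$: for $a,b,c\in A$ one should have $c\in a+b$ (in $A$) if and only if $\hat c\in D^t(\hat a,\hat b)$ (in $(\mathrm{Sper}(A),\hat A)$). Indeed, by the Local--Global principle recorded after Corollary~\ref{cor:7.6marshall}, $c\in a+b$ iff $\sigma(c)\in\sigma(a)+\sigma(b)$ in $Q_2$ for every $\sigma\in\mathrm{Sper}(A)$; and the addition of $Q_2=\{-1,0,1\}$ is exactly $D^t$ — it agrees with $D^t$ on the one-point space of signs (Theorem~\ref{teo:arstomrred}), or, by inspection, $z\in x+y$ iff $xz>0$, or $yz>0$, or ($z=0$ and $y=-x$). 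Unwinding the quantifier over $\mathrm{Sper}(A)$ turns the right-hand condition verbatim into the definition of $\hat c\in D^t(\hat a,\hat b)$. The same computation in $Q_2$ gives the companion dictionary for the ordinary value set, $\hat c\in D(\hat a,\hat b)\iff c\in ac^2+bc^2$ in $A$; recall also that $D^t(\hat a,\hat b)\subseteq D(\hat a,\hat b)$ by Definition~\ref{defn:ars}.

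AX1 and AX3 are then quick. For AX1: $i$ is a multiring morphism, hence multiplicative and sending $1,0,-1$ to the constant functions, so $\hat A=i(A)$ is a submonoid of $\{-1,0,1\}^{\mathrm{Sper}(A)}$ containing the constants, and it separates points because two distinct elements of $\mathrm{Sper}(A)=\mathrm{Hom}(A,Q_2)$ disagree at some $a\in A$, i.e.\ $\hat a$ separates them. For AX3b: by the dictionary $D^t(\hat a,\hat b)=i(a+b)$, which is nonempty since $a+b\neq\emptyset$ in the multiring $A$. For AX3: the hypothesis ``$p\in D^t(a,q)$ for some $q\in D^t(b,c)$'' translates, via the dictionary and the bijection $i$, into $p\in a+(b+c)$, and the conclusion ``$p\in D^t(r,c)$ for some $r\in D^t(a,b)$'' into $p\in(a+b)+c$; these coincide by associativity of the multiring sum.

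AX2 is where the work lies. Given $P\subseteq\hat A$ satisfying the listed closure conditions, I would transport it to $\tilde P:=i^{-1}(P)\subseteq A$. Then $\tilde P$ is a submonoid containing $0$ (as $0=-0\in\tilde P\cup-\tilde P$) and all squares (as $a^2=(-a)^2$ and one of $a,-a$ lies in $\tilde P$), and, because $D^t\subseteq D$, the ``$D$-closed'' hypothesis becomes ``$+$-closed'': $a,b\in\tilde P$ and $c\in a+b$ force $c\in\tilde P$. From this it follows readily that $\mathfrak p:=\tilde P\cap-\tilde P$ is a prime $m$-ideal of $A$ (it contains $0$, absorbs products with arbitrary elements, is closed under the sum, and is prime by hypothesis). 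Now define $\sigma\colon A\to Q_2$ by $\sigma(a)=0$ on $\mathfrak p$, $\sigma(a)=1$ on $\tilde P\setminus\mathfrak p$, and $\sigma(a)=-1$ elsewhere; preservation of $0,1,-1$ and of negation is immediate, and multiplicativity $\sigma(ab)=\sigma(a)\sigma(b)$ is a short case analysis using that $\tilde P$ is a submonoid and $\mathfrak p$ a prime $m$-ideal. The delicate clause is $c\in a+b\Rightarrow\sigma(c)\in\sigma(a)+\sigma(b)$, which I would split into cases: if $\sigma(a)=0$ (or $\sigma(b)=0$), apply item (ii) of the Proposition preceding Theorem~\ref{teo:arstomrred} to extract $-b\in ab^2-cb^2$ from $c\in a+b$, and use the $m$-ideal property of $\mathfrak p$ to conclude $c\in\mathfrak p\iff b\in\mathfrak p$; if $\sigma(a),\sigma(b)$ are equal and nonzero, use $+$-closedness of $\tilde P$ to put $c$ in $\tilde P$ (or $-\tilde P$) and again item (ii) to exclude $c\in\mathfrak p$; if $\sigma(a)=-\sigma(b)\neq0$ there is nothing to prove since $1+(-1)=Q_2$. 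Thus $\sigma\in\mathrm{Sper}(A)$, $\tilde P=\{a:\sigma(a)\ge0\}$, hence $P=\{g\in\hat A:g(\sigma)\ge0\}$, and $\sigma$ is the unique such point by point separation (AX1).

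I expect the genuine obstacle to be precisely that last clause, $c\in a+b\Rightarrow\sigma(c)\in\sigma(a)+\sigma(b)$, inside AX2: the purely order-theoretic hypotheses of AX2 describe an arbitrary ``prime cone'', and it is only the structural axioms of a \emph{real reduced} multiring (Proposition~\ref{prop:7.5marshall}, Corollary~\ref{cor:7.6marshall}, and the Proposition preceding Theorem~\ref{teo:arstomrred}) that force a prime cone of $A$ to arise from a morphism into $Q_2$. A possible shortcut avoiding this bookkeeping: endow $A$ with $D^t(a,b):=\{c:c\in a+b\}$ and $D(a,b):=\{c:c\in ac^2+bc^2\}$, verify from the Proposition preceding Theorem~\ref{teo:arstomrred} that $A$ thereby becomes a real semigroup in the sense of Definition~\ref{I.2.2rs} (the remaining axioms reducing, via the Local--Global principle, to the case of $Q_2$), and then invoke the Duality Theorem~\ref{4.1dickmann} after checking that $\mathcal{RS}$-morphisms $A\to\{-1,0,1\}$ are the same as multiring morphisms $A\to Q_2$, so that the dual space of signs is exactly $(\mathrm{Sper}(A),\hat A)$.
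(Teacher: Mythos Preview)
The paper states Theorem~\ref{teo:mrredtoars} without proof (it is recorded, like its neighbours, as part of the background dictionary between abstract real spectra, real semigroups, and real reduced multirings), so there is no argument in the text to compare your proposal against.

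Taken on its own, your proof is correct and follows the natural line. The dictionary $c\in a+b\Longleftrightarrow\hat c\in D^t(\hat a,\hat b)$, obtained from the Local--Global principle together with the explicit description of addition in $Q_2$, is exactly what reduces AX3 and AX3b to associativity and non-emptiness of the multiring sum, and AX1 is immediate from the fact that $i$ is a multiplicative injection (injectivity coming from Proposition~\ref{prop:7.5marshall}). Your handling of AX2 is the standard ``prime cone $\Rightarrow$ character'' argument: pulling $P$ back to $\tilde P\subseteq A$, showing $\mathfrak p=\tilde P\cap(-\tilde P)$ is a prime $m$-ideal, and defining $\sigma$ by sign relative to $\tilde P$. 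The only place requiring care is precisely the clause you flag, $c\in a+b\Rightarrow\sigma(c)\in\sigma(a)+\sigma(b)$, and the case split you sketch goes through once one invokes item~(ii) of the Proposition preceding Theorem~\ref{teo:arstomrred} to rule out $c\in\mathfrak p$ when $a,b\notin\mathfrak p$ with equal sign, and to force $\sigma(c)=\sigma(b)$ when $a\in\mathfrak p$. One cosmetic remark: the conclusion of AX2 as printed in Definition~\ref{defn:ars} reads $P=\{a:a(x)\le0\}$, which is incompatible with $-1\notin P$; you have (correctly) worked with $P=\{a:a(x)\ge0\}$, which is the intended statement.

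Your alternative shortcut---showing directly that $(A,D,D^t)$ is a real semigroup and invoking the Duality Theorem~\ref{4.1dickmann}---is also valid; it is essentially the content of Theorem~\ref{teo:mrredtors} together with the identification of $\mathcal{RS}$-characters of $A$ with $\mathrm{Sper}(A)$.
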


\begin{teo}\label{teo:rstomrred}
 Let $(G,\cdot,1,0,-1,D)$ be a realsemigroup and define $+:G\times 
G\rightarrow\mathcal{P}(G)\setminus\{\emptyset\}$, $a+b=\{d\in G:d\in D^t(a,b)\}$ and $-:G\rightarrow 
G$ by $-(g)=-1\cdot g$. Then $(G,+,\cdot,-,0,1)$ is a real reduced multiring.
\end{teo}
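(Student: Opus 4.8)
The plan is to deduce this from Theorem~\ref{teo:arstomrred} through the Duality Theorem~\ref{4.1dickmann}, since the additive operation $a+b=\{d:d\in D^t(a,b)\}$ is defined on a real semigroup by exactly the recipe used there for a space of signs. First I would recall that, by the duality, the real semigroup $(G,\cdot,1,0,-1,D)$ has an associated abstract real spectrum $(X_G,\widehat G)$ — concretely $X_G=\mathrm{Hom}_{\mathcal{RS}}(G,Q_2)$ and $\widehat g(x)=x(g)$ — and that the canonical map $g\mapsto\widehat g$ is an \emph{isomorphism} of $G$ onto $\widehat G$ in $\mathcal{RS}$. In particular it is a bijection intertwining $\cdot$, $0$, $\pm1$ and the representation relation $D$ in both directions. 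Since the transversal relation $D^t$ is, by the clause (trans), a formula in the language $\{\cdot,-1,D\}$, the same bijection also intertwines the abstract $D^t$ on $G$ with the $D^t$ computed from the pointwise relation $D$ on $\widehat G\subseteq Q_2^{X_G}$.

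The second step is a one–time verification inside $Q_2=\{-1,0,1\}$: for $a,b,c\in Q_2$ one has $c\in D^t(a,b)$ in the formula sense if and only if $c$ lies in the hyperaddition $a+b$ of $Q_2$ (equivalently, the ternary relation defined by (trans) on $Q_2$ is the transversal value set of Definition~\ref{defn:ars} on the three–element space of signs). Reading this pointwise over $X_G$ shows that the operation $a+b=\{d:d\in D^t(a,b)\}$ on $\widehat G$ is precisely the operation to which Theorem~\ref{teo:arstomrred} applies; hence $(\widehat G,+,\cdot,-,0,1)$ is a real reduced multiring. Transporting along the isomorphism $g\mapsto\widehat g$, which respects $+$ (a derived operation), $\cdot$, $-$, $0$ and $1$, we conclude that $(G,+,\cdot,-,0,1)$ is a real reduced multiring. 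Along the way one notes that $D^t(a,b)\neq\emptyset$ always holds (immediate pointwise in $Q_2$, hence in $\widehat G\cong G$), so that $+$ is indeed a map $G\times G\to\mathcal{P}(G)\setminus\{\emptyset\}$ as asserted.

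I expect the only real obstacle to be bookkeeping in the first step: one must use the duality at the level of \emph{objects}, making sure that $g\mapsto\widehat g$ preserves and reflects the full RS–structure — in particular the relation $D$, not merely $D^t$ — and one must pin down that the relation named $D^t$ by (trans) is literally the transversal value set of the space of signs produced by the duality; once these identifications are fixed, everything else is transport of structure. If one prefers to bypass the duality, the alternative is a direct check: the multiring axioms for $(G,+,\cdot,-,0,1)$ translate readily from the real semigroup axioms of Definition~\ref{I.2.2rs} — commutativity and the reversal law $d\in a+b\Rightarrow a\in d+(-b)$ from RS0 and RS2, associativity from RS3, neutrality $a+0=\{a\}$ from RS7$'$, the distributive inclusion $c(a+b)\subseteq ca+cb$ from RS2, and $1\neq0$ from TS3 — after which Corollary~\ref{cor:7.6marshall} reduces real reducedness to $a^3=a$ (TS2) together with the two cancellation statements $c\in a+ab^2\Rightarrow c=a$ and ``$a^2+b^2$ is a singleton''. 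These last two are the delicate point of the direct route and are exactly where the reduction axiom RS7 (together with RS4, RS6, RS8) has to be used.
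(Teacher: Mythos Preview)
The paper states Theorem~\ref{teo:rstomrred} without proof: the four results \ref{teo:arstomrred}--\ref{teo:mrredtors} are listed as bare parallel statements, so there is no argument in the text to compare against. Your reduction via the Duality Theorem~\ref{4.1dickmann} is correct and is the cleanest route available with the tools the paper records. Once $g\mapsto\widehat g$ is an $\mathcal{RS}$-isomorphism onto the functional real semigroup $\widehat G\subseteq\{-1,0,1\}^{X_G}$, the relation $D$ is preserved and reflected, hence so is $D^t$ by (trans); the pointwise check in $Q_2$ that the (trans)-defined $D^t$ coincides with the transversal value set of Definition~\ref{defn:ars} then lets Theorem~\ref{teo:arstomrred} finish the job. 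The fact that the paper places \ref{teo:rstomrred} alongside \ref{teo:arstomrred} rather than as a corollary of it suggests a direct axiom-by-axiom verification was intended, which is the alternative you sketch.

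One small correction to that alternative sketch: you appeal to RS7$'$ for the neutrality $a+0=\{a\}$, but RS7$'$ is the \emph{pre}-real-semigroup axiom; a real semigroup in Definition~\ref{I.2.2rs} is only required to satisfy RS7. You therefore need to derive $D^t(0,a)=\{a\}$ from RS7. This does go through: one first checks $0\in D(b,c)$ for all $b,c$ (from RS1 and RS4, since $0\in D(0,c)=D(0^2b,1^2c)$), whence $a\in D^t(0,a)$; and if $x\in D^t(0,a)$ then unwinding (trans) shows $0\in D^t(x,-a)\cap D^t(a,-x)$, so RS7 forces $x=a$. With that in hand, your identification of RS7 together with RS4, RS6, RS8 as the source of the two singleton conditions in Corollary~\ref{cor:7.6marshall} is on target.
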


\begin{teo}\label{teo:mrredtors}
 Let $A$ be a real reduced multiring. Then $(A,\cdot,1,0,-1,D)$ is a realsemigroup, where $d\in D(a,b)\Leftrightarrow d\in 
d^2a+d^2b$.
\end{teo}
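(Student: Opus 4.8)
The plan is to verify each of the real semigroup axioms [RS0]--[RS8] (in the formulation of Definition \ref{I.2.2rs}) for the structure $(A,\cdot,1,0,-1,D)$ with $d\in D(a,b)\Leftrightarrow d\in d^2a+d^2b$, using the characterizations of real reduced multirings collected in Corollary \ref{cor:7.6marshall} and the local-global principle following Proposition \ref{prop:7.3marshall}. First I would check that $(A,\cdot,1,0,-1)$ is a ternary semigroup: [TS1] and [TS4] are immediate from the multiring axioms, [TS2] is $a^3=a$ from Corollary \ref{cor:7.6marshall}(ii), [TS3] follows from $1\ne 0$ together with the multiring identities, and [TS5] ($x=-x\Rightarrow x=0$) follows since $x=-x$ forces $0\in x+x=x^2x+x^2x$ after multiplying, hence $x\in x(x^2+x^2)$ and one uses the uniqueness in (iv) applied with $a=b=x$, noting $0$ also lies in $x^2\cdot 0+x^2\cdot 0$-type sets — more cleanly, multiply $x=-x$ by $x$ to get $x^2=-x^2$ and then apply the argument at the level of squares, which are ``positive'' under every $\sigma$.

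Next, for the relational axioms the cleanest route is to push everything through the strong embedding $i\colon A\hookrightarrow Q_2^{\mathrm{Sper}(A)}$ of Theorem \ref{teo:mrredtors}'s hypothesis (equivalently the local-global principle): for $a,b,c,d\in A$ one has $c\in a+b$ iff $\sigma(c)\in\sigma(a)+\sigma(b)$ in $Q_2$ for every $\sigma\in\mathrm{Sper}(A)$. Under this translation, $d\in D(a,b)$ becomes the pointwise condition ``$\sigma(d)^2\sigma(a)+\sigma(d)^2\sigma(b)\ni\sigma(d)$ in $Q_2$ for all $\sigma$'', which unwinds to: for all $\sigma$, either $\sigma(d)=0$, or $\sigma(d)\sigma(a)>0$, or $\sigma(d)\sigma(b)>0$ — i.e. exactly the ARS value-set condition $d\in D(a,b)$ of Definition \ref{defn:ars} computed in $Q_2$. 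Since $(\mathrm{Sper}(A),\hat A)$ is an abstract real spectrum by Theorem \ref{teo:mrredtoars}, the axioms [AX1]--[AX3] hold there, and the classical fact that an ARS yields a real semigroup (the $D^t$ in the RS sense matching the transversal value set, via the relation (trans)) gives [RS0]--[RS8] for $\hat A$; pulling back along the isomorphism $A\cong\hat A$ yields them for $A$. Concretely: [RS0] is symmetry of $+$; [RS1] is $a\in a^2a+a^2b$, which holds since $\sigma(a)^2\sigma(a)=\sigma(a)$ and $\sigma(a)^2\sigma(a)+\sigma(a)^2\sigma(b)\ni\sigma(a)$ always in $Q_2$; [RS2] and [RS4] are the scaling/square-cancellation identities, transparent pointwise; [RS5] is exactly item (iii) of the preceding Proposition once one checks $z\in D(x,y)$ unwinds to $z\in xz^2+yz^2$; [RS8] is item (iv) of that Proposition; and [RS6], the passage $D\to D^t$, is the content of the definition (trans) together with the pointwise description — here one verifies $d\in D^t(d^2a,d^2b)$ means the three simultaneous representation conditions hold, each reducing pointwise to the ARS transversal condition in $Q_2$.

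The main obstacle is [RS3], strong associativity of $D^t$: this is the one axiom that does not reduce to a one-line pointwise check, because the element $x\in D^t(b,d)$ with $a\in D^t(x,e)$ must be produced \emph{uniformly} across all $\sigma\in\mathrm{Sper}(A)$, not chosen separately at each $\sigma$. I expect to handle this exactly as in Marshall's treatment: invoke [AX3a] (weak associativity) plus [AX3b] in the ARS $(\mathrm{Sper}(A),\hat A)$ to get the existence of a suitable $\hat x\in\hat A$, then transport $\hat x$ back to $A$ via the isomorphism $A\cong Q_{red}(A)=\hat A$ from Corollary \ref{cor:7.6marshall}; the delicate point is confirming that the witness the ARS axiom hands us actually lies in the image $\hat A$ of $A$ (it does, since $\hat A$ is the whole second coordinate of the ARS we built). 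A secondary subtlety worth spelling out is the equivalence, for real reduced multirings, between the addition-defined $+$ and the $D^t$-in-the-RS-sense: one must check $d\in a+b\Leftrightarrow d\in D^t(a,b)$ (with $D^t$ defined from $D$ by (trans)), which is precisely item (ii) of the Proposition immediately preceding this theorem, so I would cite that to close the loop. The [RS7] reduction axiom then follows from the uniqueness clauses (iii)--(iv) of Corollary \ref{cor:7.6marshall} together with (trans), again most easily seen pointwise in $Q_2$ where $D^t(a,-b)\cap D^t(b,-a)\ne\emptyset$ forces $\sigma(a)=\sigma(b)$ for every $\sigma$, hence $a=b$ by separation.
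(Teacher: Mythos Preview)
The paper states Theorem~\ref{teo:mrredtors} without proof, so there is no explicit argument in the text to compare your proposal against. That said, your approach follows precisely the route the surrounding material sets up and is correct: the unnumbered Proposition immediately preceding Theorem~\ref{teo:arstomrred} supplies exactly the ingredients you invoke --- its item (ii) is the identification of the multiring sum with $D^t$ (via the relation (trans)), item (iii) is [RS5], and item (iv) is [RS8]; the local-global principle recorded after Corollary~\ref{cor:7.6marshall} justifies the pointwise verifications in $Q_2$; and Theorem~\ref{teo:mrredtoars} supplies the ARS structure on $(\mathrm{Sper}(A),\hat A)$ from which the uniform witness for [RS3] is extracted. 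This is the proof the paper's scaffolding is pointing toward.

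One small clean-up: your treatment of [TS5] is a bit meandering before it lands. The clean line is the one you reach at the end --- from $x=-x$ obtain $x^2=-x^2$, hence for every $\sigma\colon A\to Q_2$ one has $\sigma(x)^2=-\sigma(x)^2$ in $Q_2$, which forces $\sigma(x)^2=0$ and thus $\sigma(x)=0$; the strong embedding then gives $x=0$. You might also note, to avoid any appearance of circularity, that Theorem~\ref{teo:mrredtoars} (the ARS structure on $(\mathrm{Sper}(A),\hat A)$) is established from the multiring axioms and Proposition~\ref{prop:7.3marshall} alone, independently of the present theorem, so invoking it for [RS3] is legitimate.
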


\section{Quadratically Tuned Multirings}

\begin{defn}
 A multiring is said to be \textbf{quadratically tuned} if 
 for all $a,b,c,d,e\in A$:
 \begin{description}
  \item [QT 0 -] $a^3=a$.
  
  \item [QT 1 -] $1\in 1+a$.
  
  \item [QT 2 -] If $ad=bd$, $ae=be$ and $c\in c^2d+c^2e$ then $ac=bc$.
  
  \item [QT 3 -] If $e\in (ce)^2a+(de)^2b$ then $e\in e^2a+e^2b$.
  
  \item [QT 4 -] If $a\in a^2b+a^2c$ then $a^2\in(ab)^2+(ac)^2$.
  
  \item [QT 5 -] $e\in a+b$ if and only if $e\in a e^2 + b e^2$, $-a \in b a^2 - e a^2$ and $-b\in a b^2 - e b^2$.
 \end{description}
 A morphism between two quadratically tuned multirings is just a multiring morphism. The category of quadratically tuned 
multirings will be denoted by $\mathcal{MR}_{qt}$.
\end{defn}

\begin{prop}\label{prop1}
 Every real reduced multiring is a quadratically tuned multiring. In particular, every real reduced multifield is a quadratically 
tuned multiring
\end{prop}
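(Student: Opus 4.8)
The plan is to fix a real reduced multiring $A$ and verify the six conditions QT0--QT5 one at a time, reading most of them straight off results already in hand: the Proposition listing properties (i)--(iv) of real reduced multirings, Corollary~\ref{cor:7.6marshall}, the local--global principle, and (for one item) Theorem~\ref{teo:mrredtors}. Condition QT0, $a^3=a$, is one of the defining identities of a real reduced multiring recorded in Corollary~\ref{cor:7.6marshall}(ii) (equivalently Proposition~\ref{prop:7.5marshall}(a)). Three further conditions are, after renaming dummy variables, verbatim instances of the clauses of that Proposition: QT5 is clause~(ii) with $x=e$; QT2 is the instance of clause~(iii) where $(x,y,z)=(d,e,c)$ (the elements $a,b$ unchanged), so the hypothesis $z\in xz^2+yz^2$ becomes $c\in c^2d+c^2e$; and QT4 is the instance of clause~(iv) where $(x,a,b)=(a,b,c)$, after rewriting $b^2a^2=(ab)^2$ and $c^2a^2=(ac)^2$ in its conclusion.

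For QT1, namely $1\in 1+a$, I would pass to $Q_2$ via the local--global principle: by the remark following Corollary~\ref{cor:7.6marshall} (which rests on Proposition~\ref{prop:7.3marshall}), $1\in 1+a$ holds in $A$ if and only if $1\in 1+\sigma(a)$ holds in $Q_2$ for every character $\sigma\colon A\to Q_2$, and that is checked by hand on the three possible values of $\sigma(a)$ using $1+1=\{1\}$, $1+0=\{1\}$, $1+(-1)=\{-1,0,1\}$. (Equivalently, the real semigroup attached to $A$ by Theorem~\ref{teo:mrredtors} satisfies $1\in D^t(1,a)$ --- from DT2, or from RS1 followed by RS6 --- and clause~(ii) identifies $a+b$ with $\{d:d\in D^t(a,b)\}$.)

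Condition QT3 is the only one needing a brief manipulation. Suppose $e\in(ce)^2a+(de)^2b$. Since $(ce)^2a=(c^2a)e^2$ and $(de)^2b=(d^2b)e^2$, this says $e\in(c^2a)e^2+(d^2b)e^2$, so clause~(i) applied to the pair $(c^2a,d^2b)$ gives $e\in(c^2a)A^2+(d^2b)A^2$; because $(c^2a)A^2\subseteq aA^2$ and $(d^2b)A^2\subseteq bA^2$ and multiring addition of subsets is monotone, we get $e\in aA^2+bA^2$; finally clause~(i) for the pair $(a,b)$ yields $e\in ae^2+be^2$, which is QT3. (Alternatively this is immediate from axiom RS4, resp.\ DT6, of the real semigroup of Theorem~\ref{teo:mrredtors}, since the hypothesis of QT3 says exactly $e\in D(c^2a,d^2b)$ there.)

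Assembling QT0--QT5 shows $A$ is quadratically tuned, and the assertion about real reduced multifields follows immediately, since a real reduced multifield is in particular a real reduced multiring. I do not expect a genuine obstacle: the content has been front-loaded into the Proposition on real reduced multirings, Corollary~\ref{cor:7.6marshall} and Theorem~\ref{teo:mrredtors}, so the proof is largely a dictionary check; the only steps demanding any thought are QT3, where one must reassociate the squared factors so that clause~(i) (or RS4/DT6) applies, and QT1, where one routes through $Q_2$ --- or through the real-semigroup structure --- instead of computing inside $A$ directly.
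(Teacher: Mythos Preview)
Your proposal is correct and matches the paper's approach: the paper states Proposition~\ref{prop1} without proof, having deliberately set up the preceding unnumbered Proposition (clauses (i)--(iv)) together with Corollary~\ref{cor:7.6marshall} so that QT0--QT5 read off directly, exactly as you spell out. Your extra care with QT1 (via the local--global principle or the real-semigroup axiom DT2) and the short manipulation for QT3 are the only details the paper leaves implicit, and both are handled correctly.
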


\begin{prop}\label{prop2}
 Every special multifield is a quadratically tuned multiring.
\end{prop}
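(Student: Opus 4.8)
The plan is to check the axioms QT0--QT5 one at a time, directly from the structure of a special multifield $F$. Recall that in $F$ the set $G:=F\setminus\{0\}$ is a group of exponent $2$; that $a+b$ equals the value set $D(a,b)$ of the associated special group when $a,b\ne 0$ and $a\ne -b$, while $a+(-a)=F$ and $a+0=\{a\}$; and that the following standard consequences of the special group axioms are at our disposal: $x\in D(x,y)$, $D(x,y)=D(y,x)$, $D(x,-x)=G$, $1\in D(1,1)$, and $D=D^{t}$ (the last because in a special group $(x,y)\equiv(z,w)$ forces, on comparing determinants, $w=xyz$, so the second entry of an isometric binary form is determined by the first). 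The whole argument rests on one elementary observation: for $a\in F$ one has $a^{2}=1$ if $a\ne 0$ and $a^{2}=0$ if $a=0$, so every expression $c^{2}a+c^{2}b$ occurring in the axioms collapses to $a+b$ (if $c\ne 0$) or to $\{0\}$ (if $c=0$), and likewise $ab^{2}$ is $a$ or $0$ according as $b\ne 0$ or $b=0$; moreover nonzero elements cancel, i.e.\ $ax=bx$ and $x\ne 0$ imply $a=b$.

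With this in hand QT0 is immediate ($a^{3}=a\cdot a^{2}=a$ for $a\ne 0$, and $0^{3}=0$), and QT1 follows --- once the cases $a=0$ and $a=-1$ are disposed of by $1+0=\{1\}$ and $1+(-1)=F$ --- from $1\in D(1,a)$. For QT2 one splits on $c$: if $c=0$ then $ac=0=bc$; if $c\ne 0$ then $d=e=0$ would force $c\in\{0\}$, so one of $d,e$ is nonzero, and cancelling it in $ad=bd$ or $ae=be$ gives $a=b$, hence $ac=bc$. QT3 and QT4 go the same way: after a case split recording which of the variables vanish, every surviving branch reduces to one of the listed facts --- $x\in D(x,y)$, $D(x,-x)=G$, or (for QT4, where $(ab)^{2}+(ac)^{2}$ becomes $b^{2}+c^{2}$ and $a\in b+c$ with $a\ne 0$ prevents $b,c$ from both vanishing) $1\in D(1,1)$.

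The one genuinely structural point is QT5. For $a,b,e\ne 0$ with $a\ne -b$, unwinding the definitions turns QT5 into
\[
  e\in D(a,b)\iff e\in D(a,b) \wedge -a\in D(-e,b) \wedge -b\in D(a,-e),
\]
which is exactly the assertion $D(a,b)=D^{t}(a,b)$ noted above. The remaining, degenerate instances of QT5 --- $e=0$, or $a=0$, or $b=0$, or $a=-b$ --- I would settle by direct computation, each reducing to $0+x=\{x\}$ and $x+(-x)=F$; for instance, when $e=0$ the right-hand side of QT5 reads ``$-a\in ba^{2}$ and $-b\in ab^{2}$'', which for $a,b\ne 0$ says $b=-a$, matching $0\in a+b$, while the sub-cases with $a$ or $b$ equal to $0$ are checked in the same fashion.

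I expect the main obstacle to be bookkeeping rather than ideas: a multifield carries an absorbing $0$ sitting outside the exponent-$2$ group $G$, so each axiom must be verified by an explicit case analysis over which of its variables are $0$ (and which pairs are negatives), with care taken that the ``$c^{2}$'' and ``$b^{2}$'' factors are read correctly in every branch. Apart from that, everything is forced by exponent $2$, by cancellation, and by the handful of special-group facts collected at the outset; in particular Proposition \ref{prop1} cannot be invoked here --- a special multifield need not be real reduced --- but nor is it needed.
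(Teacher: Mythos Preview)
Your verification is correct. The paper states Proposition~\ref{prop2} without proof, presumably because the authors regard the check as routine; your direct case analysis---reducing everything via $a^{2}\in\{0,1\}$ and then invoking the basic special-group facts $x\in D(x,y)$, $D(x,-x)=G$, $1\in D(1,a)$, and $D=D^{t}$---is exactly the kind of argument one expects here, and each axiom goes through as you describe.
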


Set $d\in D(a,b)\Leftrightarrow d\in ad^2+bd^2$.

\begin{prop}
 Let $A$ be a qt-multiring and $a,b,c,d,e\in A$.
 \begin{enumerate}[i -]
  \item $d\in D(a,b)$ iff $d\in aA^2+bA^2$.
  
%
 \end{enumerate}
\end{prop}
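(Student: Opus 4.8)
The plan is to establish the two directions of the equivalence separately; essentially all the work sits in the backward implication. For the forward direction, if $d\in D(a,b)$ then by the defining equivalence $d\in ad^2+bd^2$, and since $d^2\in A^2$ we trivially have $ad^2+bd^2\subseteq aA^2+bA^2$, whence $d\in aA^2+bA^2$.

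For the backward direction, assume $d\in aA^2+bA^2$, i.e. $d\in ax^2+by^2$ for some $x,y\in A$. First I would multiply this membership through by $d^2$: from $d\in ax^2+by^2$ we get $d^3\in d^2(ax^2+by^2)$, and by distributivity together with commutativity of the product, $d^2(ax^2+by^2)\subseteq a(xd)^2+b(yd)^2$; using \textbf{QT0} (so $d^3=d$) this yields $d\in a(xd)^2+b(yd)^2$. Now I would apply \textbf{QT3} in the instance where its ``$e$'' is $d$, its ``$c$'' is $x$, its ``$d$'' is $y$, and ``$a$'', ``$b$'' are left unchanged: the hypothesis of \textbf{QT3} then reads $d\in(xd)^2a+(yd)^2b$, which is exactly what we just obtained (up to commutativity of the hyperaddition and of the product), so \textbf{QT3} delivers $d\in d^2a+d^2b=ad^2+bd^2$, i.e. $d\in D(a,b)$. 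This closes the equivalence.

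In effect only \textbf{QT0} and \textbf{QT3} are used, and the argument is the exact analogue of the corresponding fact for real reduced multirings (item (i) of the preceding proposition). I do not anticipate a genuine obstacle; the only care required is in the variable substitution for \textbf{QT3}, so that $(xd)^2$ and $(yd)^2$ are genuinely of the form $(ce)^2$ and $(de)^2$ with $e=d$ — which holds because $xd=x\cdot d$ and $yd=y\cdot d$ are products in the monoid $(A,\cdot)$ — and in observing that it does not matter whether the multiring distributive law is stated as an equality or merely an inclusion, since we only need $d=d^3\in d^2(ax^2+by^2)\subseteq a(xd)^2+b(yd)^2$.
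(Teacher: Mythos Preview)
Your proof is correct and follows essentially the same route as the paper's: the forward direction is immediate from the definition, and for the backward direction you multiply $d\in ax^2+by^2$ through by $d^2$, use \textbf{QT0} to replace $d^3$ by $d$, and then invoke \textbf{QT3} to pass from $d\in a(xd)^2+b(yd)^2$ to $d\in ad^2+bd^2$. The only difference is that you spell out the variable substitution in \textbf{QT3} and the distributivity step more explicitly than the paper does.
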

\begin{proof}
  \begin{enumerate}[i -]
  \item ($\Rightarrow$) is just the definition of $D$. For ($\Leftarrow$), let $d\in ax^2+by^2$. Then
  \begin{align*}
   d\in ax^2+by^2&\Rightarrow d^3=d\in ax^2d^2+by^2d^2 \\
   &\stackrel{QT3}{\Rightarrow}d\in ad^2+bd^2\Rightarrow d\in D(a,b).
  \end{align*}
  
%
 \end{enumerate}
\end{proof}

\begin{teo}
 Let $(G,\cdot,1,0,-1,D)$ be a pre-real semigroup and define $+:G\times 
G\rightarrow\mathcal{P}(G)\setminus\{\emptyset\}$, $a+b=\{d\in G:d\in D^t(a,b)\}$ and $-:G\rightarrow 
G$ by $-(g)=-1\cdot g$. Then $(G,+,\cdot,-,0,1)$ is a quadratically tuned multiring.
\end{teo}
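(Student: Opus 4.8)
The plan is to first verify that $(G,+,\cdot,-,0,1)$ is a multiring and then to check the six axioms QT0--QT5 one at a time. Throughout I would work with the $D^t$-presentation of a pre-real semigroup (Definition \ref{I.2.2rs2}) and switch freely to the $D$-axioms of Definition \ref{I.2.2rs} via the relation (trans); and I would keep track of the fact that the reduction axiom [DT10] (equivalently [RS7]) is never invoked — this is precisely why one lands in $\mathcal{MR}_{qt}$ and not in $\mathcal{MR}_{red}$, and it is also why the multiring part of the argument runs exactly as in Theorem \ref{teo:rstomrred}.

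For the multiring structure: the multiplicative reduct $(G,\cdot,1)$ is a commutative monoid with absorbing zero by [TS1]--[TS4], and $-(-a)=a$ is [TS3]; commutativity $a+b=b+a$ is [DT0]; $0+a=\{a\}$ is [DT9]. Applying [DT3] with the factor $-1$ gives $-d\in D^t(-a,-b)$ whenever $d\in D^t(a,b)$, and feeding this into [DT1] yields the reversal law $c\in a+b\Rightarrow a\in c+(-b)$; from $b\in 0+b$ and reversal one gets $0\in b+(-b)$, and from $0\in a+b$ together with reversal and [DT9] one gets $b=-a$, so additive inverses exist uniquely. Non-emptiness of $a+b$ then follows from strong associativity [DT4] applied to $a\in D^t(a,0)$ and $0\in D^t(b,-b)$, which outputs some $x\in D^t(a,b)$. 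Distributivity $a(b+c)\subseteq ab+ac$ is [DT3]. The one place demanding actual work is associativity of $+$: from $x\in(a+b)+c$, i.e. $e\in D^t(a,b)$ and $x\in D^t(e,c)$, applications of [DT0] and [DT4] produce $y\in D^t(a,c)$ with $x\in D^t(y,b)$, so $x\in(a+c)+b$; thus $(a+b)+c\subseteq(a+c)+b$, relabelling gives the reverse inclusion, hence the ternary sum is symmetric in its three arguments, and combining with commutativity yields $(a+b)+c=a+(b+c)$.

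For the tuning axioms I would first record the reformulation
\begin{align*}
 d\in D^t(d^2a,d^2b)\iff d\in D(a,b)\qquad(d,a,b\in G),
\end{align*}
which comes from $d^3=d$ (hence $d^4=d^2$, $d^6=d^2$) together with [RS2], [RS4] and [RS6]: $d\in D(a,b)\Rightarrow d=d^3\in D(d^2a,d^2b)$ by [RS2]; $d\in D(d^2a,d^2b)\Rightarrow d\in D^t(d^4a,d^4b)=D^t(d^2a,d^2b)$ by [RS6]; $d\in D(d^2a,d^2b)\Rightarrow d\in D(a,b)$ by [RS4]; and $D^t\subseteq D$ is automatic from (trans). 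Then: QT0 is [TS2]; QT1 ($1\in D^t(1,a)$) is [DT2]; QT2 follows by turning $c\in c^2d+c^2e$ into $c\in D(d,e)$ and quoting [RS5]; QT3 ($e\in D^t(c^2e^2a,d^2e^2b)\Rightarrow e\in D^t(e^2a,e^2b)$) follows by passing to $D$, deleting the factors $c^2,d^2$ with [RS4], and returning to $D^t$ with [RS6]; QT4 ($a\in D^t(a^2b,a^2c)\Rightarrow a^2\in D^t((ab)^2,(ac)^2)$) follows from [RS8] and then [RS6] (using $a^4=a^2$ and $a^6=a^2$). For QT5, the displayed equivalence together with $(-x)^2=x^2$ converts the three right-hand conditions ``$e\in ae^2+be^2$'', ``$-a\in ba^2-ea^2$'', ``$-b\in ab^2-eb^2$'' into ``$e\in D(a,b)$'', ``$-a\in D(b,-e)$'', ``$-b\in D(a,-e)$''; by the symmetry [RS0] of $D$ these are exactly the three clauses that (trans) uses to define ``$e\in D^t(a,b)$'', i.e. ``$e\in a+b$''. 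This establishes all of QT0--QT5.

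I expect the only genuinely nontrivial step to be the associativity of $+$ — everything else is a one-line translation of a single pre-real-semigroup axiom once the displayed equivalence and the cube-law arithmetic ($d^3=d$, $d^4=d^2$, $d^6=d^2$) are in place. For associativity the care lies in orchestrating [DT4] and [DT0] with the correct matching of slots (the variable chained by [DT4] occupies the second argument of its first relation and the first argument of its second) and then upgrading the resulting ``swap the last two summands'' inclusion to full permutation-symmetry of the ternary sum; a secondary pitfall is simply mismatching an exponent inside the $D/D^t$ manipulations of QT2--QT5.
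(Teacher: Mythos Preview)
Your argument is correct and complete. The paper states this theorem without proof, so there is nothing to compare line by line; but your plan is exactly the intended one --- reuse the multiring verification of Theorem~\ref{teo:rstomrred} (which nowhere uses [RS7]/[DT10]) and then read QT0--QT5 off the $D^t$-axioms via the equivalence $d\in D^t(d^2a,d^2b)\Leftrightarrow d\in D(a,b)$, noting that QT3 and QT4 are literally [DT6] and [DT8] and that QT5 unwinds to the three clauses of (trans) modulo [RS0].
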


\begin{cor}
 The correspondence above can be extended to a functor $M:\mathcal{PRS}\rightarrow\mathcal{MR}_{qt}$.
\end{cor}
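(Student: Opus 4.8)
The plan is to show that the object assignment $G\mapsto M(G)$ from the previous theorem underlies a functor whose action on arrows is simply "do nothing to the underlying map": a PRS-morphism $f\colon(G,\cdot,1,0,-1,D_G)\to(H,\cdot,1,0,-1,D_H)$ is sent to the same set-theoretic function $f\colon M(G)\to M(H)$. Granting that this underlying map is a morphism of multirings, functoriality is then immediate and formal, since $M$ does not alter underlying sets: $M(\mathrm{id}_G)=\mathrm{id}_{M(G)}$, and $M(g\circ f)=M(g)\circ M(f)$ because composition of PRS-morphisms is composition of underlying maps, which is composition of multiring morphisms.

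The substance is therefore the single assertion that every PRS-morphism $f$ is a morphism $M(G)\to M(H)$ in $\mathcal{MR}_{qt}$. Recall that a PRS-morphism is, in particular, a morphism of ternary semigroups preserving the representation relation $D$ (equivalently, the transversal relation $D^t$): so $f(1)=1$, $f(0)=0$, $f(-1)=-1$, $f(ab)=f(a)f(b)$, hence $f(-a)=-f(a)$, and $d\in D_G(a,b)\Rightarrow f(d)\in D_H(f(a),f(b))$. By the definition of the additions on $M(G)$ and $M(H)$, the inclusion $f(a+b)\subseteq f(a)+f(b)$ required of a multiring morphism amounts exactly to preservation of transversal representation, $d\in D^t_G(a,b)\Rightarrow f(d)\in D^t_H(f(a),f(b))$. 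Combined with the semigroup identities above and the previous theorem (which guarantees that $M(G)$ and $M(H)$ are quadratically tuned multirings), this makes $f$ a morphism in $\mathcal{MR}_{qt}$.

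The only real point to check is that preservation of $D$ upgrades to preservation of $D^t$, and this is short: using the defining equivalence
\[
  a\in D^t(b,c)\iff a\in D(b,c)\ \wedge\ -b\in D(-a,c)\ \wedge\ -c\in D(b,-a),
\]
if $d\in D^t_G(a,b)$ then each of the three conjuncts $d\in D_G(a,b)$, $-a\in D_G(-d,b)$, $-b\in D_G(a,-d)$ is carried by $f$ — using that $f$ preserves $D$ and commutes with the involution $-$ — to the corresponding conjunct for the triple $(f(d),f(a),f(b))$ in $H$; hence $f(d)\in D^t_H(f(a),f(b))$, i.e. $f(d)\in f(a)+f(b)$.

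I expect this last verification to be the main (and only mild) obstacle: there is no genuine categorical difficulty, merely the bookkeeping of transporting the three $D$-conditions defining $D^t$ across $f$. Once that is in place, asserting $M(\mathrm{id})=\mathrm{id}$ and $M(g\circ f)=M(g)\circ M(f)$ finishes the proof that $M\colon\mathcal{PRS}\to\mathcal{MR}_{qt}$ is a functor.
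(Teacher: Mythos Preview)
Your argument is correct and is exactly the natural verification; the paper itself states the corollary without proof, treating it as an immediate consequence of the preceding theorem. Your identification of the one nontrivial point --- that preservation of $D$ automatically yields preservation of $D^t$ via the defining conjunction (trans) --- is precisely what makes the passage from objects to arrows work, and the remaining functoriality conditions are, as you say, purely formal because $M$ acts as the identity on underlying maps.
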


\begin{teo}
 Let $A$ be a quadratically tuned multiring. Then $(A,\cdot,1,0,-1,D)$ is a pre-real semigroup, where $d\in D(a,b)\Leftrightarrow 
d\in 
d^2a+d^2b$.
\end{teo}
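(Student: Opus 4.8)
The plan is to verify, one at a time, the defining conditions of a pre-real semigroup in the sense of Definition \ref{I.2.2rs}: that $(A,\cdot,1,0,-1)$ is a ternary semigroup, and that the relation $D$ — together with the relation $D^t$ it induces via (trans) — satisfies RS0--RS6, RS8 and RS7'. The ternary-semigroup axioms are light: TS1 and TS4 are part of the multiring structure, TS2 is QT0, TS3 holds because $(-1)^2 = 1$ in any multiring (together with the non-degeneracy $-1 \neq 1$), and TS5 ($x = -x \Rightarrow x = 0$) — the one condition with no direct QT-analogue — must be extracted from the multiring axioms together with QT0 and QT1. The substance lies in the $D$-axioms, and the whole argument is organized around the identity $D^t(a,b) = a+b$ (the multiring sum), supplemented by the characterization $D(a,b) = aA^2 + bA^2$ already established in the Proposition above (whose proof uses QT3).

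First I would prove the identity $D^t(a,b) = a+b$. Unwinding (trans), $e \in D^t(a,b)$ says $e \in D(a,b)$, $-a \in D(-e,b)$ and $-b \in D(a,-e)$; since $(-1)^2 = 1$ gives $(-a)^2 = a^2$ and $(-b)^2 = b^2$, these three conditions read $e \in e^2 a + e^2 b$, $-a \in b a^2 - e a^2$ and $-b \in a b^2 - e b^2$ respectively, and QT5 is precisely the assertion that their conjunction is equivalent to $e \in a + b$. This is the step where one must be careful: the whole identity hinges on the sign-and-square bookkeeping lining up the three clauses of (trans) with the three clauses of QT5.

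With the identity in hand the remaining axioms fall out essentially by translation. RS0 is commutativity of $+$. RS1 holds because $a \in a\cdot 1^2 + b\cdot 0^2 = a + 0 = \{a\}$, so $a \in aA^2 + bA^2 = D(a,b)$. RS2 is a direct computation using $x^3 = x$ and $d(u+v) \subseteq du + dv$, since $(ad)^2(bd) = a^2 bd$ and $(ad)^2(cd) = a^2 cd$. RS3 becomes, via $D^t = +$, exactly the associativity of the multiring sum, $(b+d)+e = b+(d+e)$. RS4 follows from $D(c^2a, d^2b) = (c^2a)A^2 + (d^2b)A^2 \subseteq aA^2 + bA^2 = D(a,b)$. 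RS5 is QT2 after unfolding $D(d,e)$. RS6 is immediate, since $c \in D(a,b)$ means $c \in c^2a + c^2b = D^t(c^2a, c^2b)$. RS7' is $D^t(0,a) = 0 + a = \{a\}$. And RS8 is QT4: $a \in D(b,c)$ gives $a^2 \in (ab)^2 + (ac)^2 = a^2b^2 + a^2c^2 = (a^2)^2 b^2 + (a^2)^2 c^2$ (using $a^4 = a^2$), i.e.\ $a^2 \in D(b^2,c^2)$.

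The main obstacles are therefore (i) getting the key identity $D^t(a,b) = a+b$ exactly right, since everything downstream of it is bookkeeping, and (ii) the ternary-semigroup axiom TS5 — the lone statement here not obtained by translating a QT-axiom. Together with the preceding theorem (every pre-real semigroup yields a quadratically tuned multiring) and its corollary, this would give the expected correspondence between $\mathcal{PRS}$ and $\mathcal{MR}_{qt}$.
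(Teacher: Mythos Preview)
The paper states this theorem without proof, so there is nothing to compare against directly. Your plan is the natural one and is essentially forced by the setup: the identity $D^t(a,b)=a+b$ is exactly QT5 unwound, and once it is in hand each RS-axiom translates to the corresponding QT-axiom or multiring axiom precisely as you indicate (RS0 $\leftrightarrow$ commutativity, RS3 $\leftrightarrow$ associativity, RS5 $\leftrightarrow$ QT2, RS6 immediate, RS7$'$ $\leftrightarrow$ $0+a=\{a\}$, RS8 $\leftrightarrow$ QT4, and RS1, RS2, RS4 via the characterization $D(a,b)=aA^2+bA^2$ from the preceding Proposition). Your verification of these is correct.

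You are also right to flag TS5 (and the non-degeneracy $-1\neq 1$ in TS3) as the one place not covered by a direct translation: the QT-axioms as listed do not obviously exclude elements with $x=-x\neq 0$, nor the degenerate multiring $\{0\}$. Either a short separate argument or an implicit non-triviality hypothesis on $A$ is needed here; the paper does not address this point.
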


\begin{cor}
 The correspondence above can be extended to a functor $R:\mathcal{MR}_{qt}\rightarrow\mathcal{PRS}$.
\end{cor}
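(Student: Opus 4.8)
The plan is to define $R$ so that it does nothing to underlying sets and underlying functions: on objects $R(A):=(A,\cdot,1,0,-1,D)$ with $d\in D(a,b)\Leftrightarrow d\in d^2a+d^2b$, and on a multiring morphism $f\colon A\to B$ one simply puts $R(f):=f$, now read as a map $R(A)\to R(B)$. The assignment on objects needs no further argument: the theorem immediately preceding this corollary already asserts that $R(A)$ is a pre-real semigroup. So the entire content of the corollary is the verification that $R(f)$ is a morphism in $\mathcal{PRS}$, together with the (formal) functoriality identities.

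First I would unwind what a PRS-morphism is: a map of ternary semigroups --- i.e.\ a multiplicative map carrying $1,0,-1$ to $1,0,-1$ --- that preserves the representation relation, $d\in D_A(a,b)\Rightarrow f(d)\in D_B(f(a),f(b))$. Preservation of $D^t$ then comes for free, since $D^t$ is term-definable from $D$, the product and the negation through the displayed formula for transversal representation; alternatively one checks directly, below, that each of the three $D$-memberships making up $d\in D^t_A(a,b)$ is preserved. That $f$ is a ternary-semigroup morphism is immediate from the definition of a multiring morphism: $f$ is multiplicative, $f(1)=1$, $f(0)=0$, and $f(-1)=f(-1\cdot1)=-f(1)=-1$.

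For the relation, suppose $d\in D_A(a,b)$, which by definition means $d\in d^2a+d^2b$ in $A$. Setting $x:=d^2a$ and $y:=d^2b$ we have $d\in x+y$, hence $f(d)\in f(x)+f(y)$ by the multiring-morphism property, and $f(x)=f(d)^2f(a)$, $f(y)=f(d)^2f(b)$ by multiplicativity. Thus $f(d)\in f(d)^2f(a)+f(d)^2f(b)$, i.e.\ $f(d)\in D_B(f(a),f(b))$, as required. If one prefers to argue with $D^t$ directly, the same one-line computation applied to the three defining memberships $d\in D(a,b)$, $-a\in D(-d,b)$, $-b\in D(a,-d)$ yields $f(d)\in D^t_B(f(a),f(b))$.

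Functoriality is then purely formal, since $R$ alters neither underlying sets nor underlying functions: $R(\mathrm{id}_A)=\mathrm{id}_A=\mathrm{id}_{R(A)}$ and $R(g\circ f)=g\circ f=R(g)\circ R(f)$. I do not expect a genuine obstacle here; the only non-bookkeeping step is the short computation showing that multiring morphisms respect $D$, and even that reduces to the single fact that a multiring morphism satisfies $c\in a+b\Rightarrow f(c)\in f(a)+f(b)$ and is multiplicative. If anything merits care, it is checking that the notion of morphism really is ``the same'' on both sides --- that $\mathcal{MR}_{qt}$-morphisms and $\mathcal{PRS}$-morphisms coincide under $R$ --- which is precisely what the verification above establishes in the direction needed to make $R$ a well-defined functor.
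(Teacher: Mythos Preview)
Your proposal is correct and is exactly the intended routine verification: the paper does not supply a proof of this corollary, treating it as an immediate consequence of the preceding theorem, and your argument fills in precisely the expected details (that a multiring morphism preserves products and $1,0,-1$, hence is a ternary-semigroup morphism, and that $d\in d^2a+d^2b$ is carried to $f(d)\in f(d)^2f(a)+f(d)^2f(b)$, whence $D$ and therefore $D^t$ are preserved; functoriality is then formal).
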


\begin{cor}
 The following diagram commute:
 $$\xymatrix@!=4.pc{\mathcal{RS}\ar[dr]^M\ar[rr]^\iota && \mathcal{PRS}\ar[dr]^M \\
 & \mathcal{MR}_{red}\ar[rr]^\iota && \mathcal{MR}_{qt} \\
 \mathcal{RSG}\ar[rd]^M\ar'[r][rr]\ar[uu]^\iota && \mathcal{SG}\ar[rd]^M\ar'[u][uu] \\
 & \mathcal{MF}_{red}\ar[uu]^\iota\ar[rr] && \mathcal{SMF}\ar[uu]_\iota}$$
\end{cor}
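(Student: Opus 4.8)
The diagram is a cube with source $\mathcal{RSG}$ and sink $\mathcal{MR}_{qt}$, so it commutes as soon as each of its six faces commutes. The plan is to exploit the fact that every functor occurring in the cube is ``transparent'' on underlying data: each arrow labelled $\iota$ is a full subcategory inclusion, leaving the underlying ternary semigroup (respectively multiring) and every morphism untouched; and each arrow labelled $M$ --- from $\mathcal{RS}$, $\mathcal{PRS}$, $\mathcal{RSG}$ or $\mathcal{SG}$ --- is given by the single uniform recipe ``retain $\cdot,1,0,-1$, set $-a:=(-1)\cdot a$, and define $a+b:=\{d\in G:d\in D^t(a,b)\}$'', with $D^t$ obtained from the representation relation $D$ by the intrinsic equivalence of Definition~\ref{I.1.1rs}, and is moreover the identity on underlying maps. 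Consequently morphisms never create a discrepancy: around any face, the two composites are literally the same set-map equipped with the relevant structure-preserving certificate. Everything therefore reduces to checking, face by face, that the two composite functors agree on objects.

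First I would verify the top face, $\mathcal{RS}\xrightarrow{\iota}\mathcal{PRS}\xrightarrow{M}\mathcal{MR}_{qt}$ versus $\mathcal{RS}\xrightarrow{M}\mathcal{MR}_{red}\xrightarrow{\iota}\mathcal{MR}_{qt}$. Given a real semigroup $(G,\cdot,1,0,-1,D)$, its image $\iota(G)$ in $\mathcal{PRS}$ has the very same carrier, constants, multiplication and relation $D$, hence the very same derived relation $D^t$; so $M(\iota(G))$ coincides, as a multiring, with $M(G)$ computed via Theorem~\ref{teo:rstomrred}. That theorem tells us this multiring is real reduced, and the inclusion $\iota:\mathcal{MR}_{red}\to\mathcal{MR}_{qt}$ does not change it, whence $\iota(M(G))=M(\iota(G))$. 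The bottom face $\mathcal{RSG},\mathcal{SG},\mathcal{MF}_{red},\mathcal{SMF}$ is the same argument one ``dimension'' down, in the group-like setting, using the same $M$-recipe together with Proposition~\ref{prop2} to see that $M$ of a special object lands inside $\mathcal{SMF}$. The back face $\mathcal{RSG},\mathcal{RS},\mathcal{SG},\mathcal{PRS}$ and the front face $\mathcal{MF}_{red},\mathcal{MR}_{red},\mathcal{SMF},\mathcal{MR}_{qt}$ are compositions of full inclusions only, so they commute at once: a reduced special group (respectively a real reduced multifield), regarded inside the largest category of the face, carries the same underlying structure no matter which intermediate category it is routed through.

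It remains to treat the two side faces, and these go exactly the same way. The left face $\mathcal{RSG},\mathcal{RS},\mathcal{MF}_{red},\mathcal{MR}_{red}$ and the right face $\mathcal{SG},\mathcal{PRS},\mathcal{SMF},\mathcal{MR}_{qt}$ each assert that ``apply $M$, then include'' equals ``include, then apply $M$''; since $M$ is one and the same formula in the group-like and the semigroup-like worlds and is unaffected by forgetting the reduction axiom, and since the cited theorems ensure that $M\circ\iota$ already lands in the narrower target category, the two composites agree on objects and, as noted above, on morphisms. With all six faces commutative, the cube commutes, which is the assertion of the corollary. I do not expect a genuine obstacle here: the only point requiring care is to check, in each instance, that $M$ applied to the reduced (respectively special) object really does land in the reduced (respectively special) multiring category --- but that is precisely the content of Theorem~\ref{teo:rstomrred} and Propositions~\ref{prop1} and~\ref{prop2}, which we are entitled to use.
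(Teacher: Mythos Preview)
Your proposal is correct and is essentially the argument the paper leaves implicit: the corollary is stated without proof, as an immediate consequence of the preceding theorems and the fact that every arrow in the cube is either a full-subcategory inclusion or an instance of the single uniform recipe $a+b:=D^t(a,b)$. Your face-by-face verification simply makes explicit what the paper takes for granted, and the citations to Theorem~\ref{teo:rstomrred} and Propositions~\ref{prop1}--\ref{prop2} are exactly the ingredients needed to see that each $M$ lands in the claimed subcategory.
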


\section{Examples}

We already see that special multifields, and real reduced multifields and multirings are quadratically tuned multirings 
(\ref{prop1}, \ref{prop2}). By the functors established in \cite{ribeiro2016functorial}, we have in particular, that special 
groups and real semigroups are quadratically tuned multirings. Now, in the view of \cite{roberto2016promiscuously}, we have a 
more ``algebraically tasted'' example below. 

\begin{ex}
 Let $A$ be a ring with $2\in\dot A$ and $T$ a preordering. Set $S=T\setminus\{0\}$ and
 $$M(A):=A/_mS \mbox{(The Marshall's Quotient)}.$$
 Since $\overline a^2=\overline 1\in M(A)$, is straghtforward verify that $M(A)$ is a quadratically tuned multiring. Now, as in 
\cite{dickmann2004real}, let the set 
$G_T(A)$ consist of all functions $\overline a:\mbox{Sper}_T(A)\rightarrow\bm3$, for $a\in A$, where
$$\overline a(\alpha)=\begin{cases}
                          1\mbox{ if }a\in\alpha\setminus(-\alpha) \\
                          0\mbox{ if }a\in\alpha\cap-\alpha \\
                          -1\mbox{ if }a\in(-\alpha)\cap\alpha.
                         \end{cases}$$
with the operation induced by product in $A$. We have that $G_T(A)$ is a real semigroup. The map $a\mapsto\overline a$ from 
$M(A)$ to $G_T(A)$ is a multiring morphism. Since that for all $t\in T\setminus\{0\}$, $\overline t=\overline1\in G_T(A)$, by the 
universal property of Marshall's quotient we have an induced (surjective) morphism $\varphi:M(A)\rightarrow G_T(A)$.
 \end{ex}

\section{Quadratic Forms over QT-Multirings}

Let $A$ be a QT-multiring which will remain fixed throughout this section.

A \textbf{form of dimension $n\ge1$ over $A$} is just an $n$-tuple $\varphi:=\langle a_1,...,a_n\rangle$ where 
$a_1,...,a_n\in A$. The dimension of $\varphi$ is denoted by $\mbox{dim}(\varphi)$. The discriminant of $f$ is 
defined to be $\mbox{disc}(\varphi):=a_1a_2...a_n\in A$. If $a\in A$, we can \textbf{scale} $\varphi$ by $a$ to 
obtain the form $a\varphi:=\langle aa_1,...,aa_n\rangle$. The \textit{sum} of $\varphi$ and a form $\psi=\langle 
b_1,...,b_m\rangle$ is defined by $\varphi\oplus\psi=\langle 
a_1,...,a_n,b_1,...,b_m\rangle$ and the \textbf{tensor product} of $\varphi$ and $\psi$ is defined by
$\varphi\otimes\psi=\langle a_1b_1,...,a_ib_j,...,a_nb_m\rangle$.

\textbf{Strong Isometry} of one and two-dimensional forms is defined by $\langle a\rangle\equiv_s\langle 
b\rangle\Leftrightarrow a=b$ and $\langle a,b\rangle\equiv\langle c,d\rangle\Leftrightarrow$ $ab=cd$ 
and $a+b=c+d$. For forms of dimension $n\ge3$ isometry is defined inductively by:
$\langle a_1,...,a_n\rangle\equiv_s\langle b_1,...,b_n\rangle$ if and only if there are 
 $x,y,z_3,...,z_n\in A$ such that $\langle a_1,x\rangle\equiv_s\langle b_1,y\rangle$,
 $\langle a_2,...,a_n\rangle\equiv_s\langle x,z_3,...,z_n\rangle$ and
 $\langle b_2,...,b_n\rangle\equiv_s\langle y,z_3,...,z_n\rangle$.

 The strong isometry is just the extension of the isometry concepts provenient of the already known abstract theories of 
quadratic forms (such as special groups and special multifields). However, in the multiring context, we have another 
``natural'' isometry at hand, the \textbf{weak isometry} defined below:
 
\textbf{Weak Isometry} of one and two-dimensional forms is defined by $\langle a\rangle\equiv_w\langle 
b\rangle\Leftrightarrow a=b$ and $\langle a,b\rangle\equiv\langle c,d\rangle\Leftrightarrow$ $ab=cd$ 
and $a+b\cap c+d\ne\emptyset$. For forms of dimension $n\ge3$ isometry is defined inductively by:
$\langle a_1,...,a_n\rangle\equiv_w\langle b_1,...,b_n\rangle$ if and only if there are 
 $x,y,z_3,...,z_n\in A$ such that $\langle a_1,x\rangle\equiv_w\langle b_1,y\rangle$,
 $\langle a_2,...,a_n\rangle\equiv_w\langle x,z_3,...,z_n\rangle$ and
 $\langle b_2,...,b_n\rangle\equiv_w\langle y,z_3,...,z_n\rangle$. 
 
 Observe that $\varphi\equiv_s\psi\Rightarrow\varphi\equiv_w\psi$.

 \begin{prop}
  Let $A$ be a QT-multiring, $a,b,c,d,e\in A$ and $\varphi:=\langle a_1,...,a_n\rangle$ and $\psi=\langle 
b_1,...,b_m\rangle$ be forms over $A$. For $\sigma\in S_n$, write $\varphi^{\sigma}$ for the $n$-form
$\varphi^{\sigma}=\langle a_{\sigma(1)},...,a_{\sigma(n)}\rangle$. Then
\begin{enumerate}[i -]
 \item $\langle a,b,c\rangle\equiv_w\langle a,b,c\rangle^{\sigma}$.
 
 \item $\varphi\equiv_w\varphi^{\sigma}$.
 
%
%
\end{enumerate}
 \end{prop}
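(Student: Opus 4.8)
The plan is to prove (ii) by induction on $n$, with (i) being the instance $n=3$. Throughout I use only two elementary facts: reflexivity of $\equiv_w$ (by a parallel induction on dimension — clear for dimensions $1$ and $2$, and for dimension $n$ one takes the witnesses $x=y=a_2$, $z_j=a_j$ and applies the inductive hypothesis to the tail), and the identity $\langle a,b\rangle\equiv_w\langle b,a\rangle$, which holds because $ab=ba$ and $(a+b)\cap(b+a)=a+b\neq\emptyset$ by commutativity of the multiring. I emphasize that I do \emph{not} assume $\equiv_w$ is transitive: that is exactly why the argument cannot proceed by writing $\sigma$ as a product of adjacent transpositions and chaining the resulting isometries. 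Instead, the whole permutation must be absorbed by a single choice of witnesses in the inductive clause defining $\equiv_w$ on $n$-forms.

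So, assume the statement for $(n-1)$-forms: $\psi\equiv_w\psi^\rho$ for every $(n-1)$-form $\psi$ and every $\rho\in S_{n-1}$. Fix $\sigma\in S_n$ and $\varphi=\langle a_1,\dots,a_n\rangle$, so that $\varphi^\sigma=\langle a_{\sigma(1)},\dots,a_{\sigma(n)}\rangle$. If $\sigma(1)=1$, then $\sigma$ permutes $\{2,\dots,n\}$; take $x=y=a_2$ and $z_j=a_j$ for $j\ge3$. The isometries $\langle a_1,x\rangle\equiv_w\langle a_{\sigma(1)},y\rangle$ and $\langle a_2,\dots,a_n\rangle\equiv_w\langle x,z_3,\dots,z_n\rangle$ are instances of reflexivity, while $\langle a_{\sigma(2)},\dots,a_{\sigma(n)}\rangle\equiv_w\langle y,z_3,\dots,z_n\rangle=\langle a_2,\dots,a_n\rangle$ is the inductive hypothesis applied to the $(n-1)$-form $\langle a_{\sigma(2)},\dots,a_{\sigma(n)}\rangle$ together with the permutation reordering it to $\langle a_2,\dots,a_n\rangle$. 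Hence $\varphi\equiv_w\varphi^\sigma$.

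If instead $\sigma(1)=k\neq1$, then $a_{\sigma(1)}=a_k$ occurs among the entries of the tail $\langle a_2,\dots,a_n\rangle$; write $\langle a_{i_3},\dots,a_{i_n}\rangle$ for that tail with the entry $a_k$ removed, and take $x=a_k$, $y=a_1$, $z_j=a_{i_j}$ for $j\ge3$. Then $\langle a_1,x\rangle\equiv_w\langle a_{\sigma(1)},y\rangle$ reads $\langle a_1,a_k\rangle\equiv_w\langle a_k,a_1\rangle$, which is the basic swap; $\langle a_2,\dots,a_n\rangle\equiv_w\langle x,z_3,\dots,z_n\rangle=\langle a_k,a_{i_3},\dots,a_{i_n}\rangle$ is the inductive hypothesis applied to the $(n-1)$-form $\langle a_2,\dots,a_n\rangle$ and the cyclic permutation bringing $a_k$ to the front; and $\langle a_{\sigma(2)},\dots,a_{\sigma(n)}\rangle\equiv_w\langle y,z_3,\dots,z_n\rangle=\langle a_1,a_{i_3},\dots,a_{i_n}\rangle$ is again the inductive hypothesis, since both $(n-1)$-forms have the list of entries $\{a_j:j\neq k\}$ in some order. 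This gives $\varphi\equiv_w\varphi^\sigma$ and finishes the induction; part (i) is the case $n=3$, which one may alternatively verify by running the above witness choices over the six elements of $S_3$.

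The delicate point is the one already flagged: without transitivity of $\equiv_w$ there is no ``chain equivalence'' to lean on, so a single application of the recursive definition has to do all the work. What makes this possible is the asymmetric move in the second case — detaching exactly the entry $a_{\sigma(1)}$ and pushing it to the front of the tail — because deleting $a_{\sigma(1)}$ from $(a_2,\dots,a_n)$ leaves precisely the sublist $(a_{i_3},\dots,a_{i_n})$ that must serve simultaneously as $z_3,\dots,z_n$ on both sides of the two tail-isometries. Once reflexivity and the $(n-1)$-form hypothesis are in place, everything else is bookkeeping.
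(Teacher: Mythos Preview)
Your argument is correct. The paper itself does not include a proof of this proposition (the statement is followed directly by \verb|\newpage| and the bibliography), so there is nothing to compare your approach against; what you have written is presumably close to what the authors had in mind.

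Two small remarks. First, you should state the base cases $n=1,2$ of the \emph{main} induction explicitly (you do this for the auxiliary reflexivity induction but only implicitly for the permutation claim): $n=1$ is trivial since $S_1=\{\mathrm{id}\}$, and $n=2$ is exactly the swap $\langle a,b\rangle\equiv_w\langle b,a\rangle$ together with reflexivity. Second, your emphasis that transitivity of $\equiv_w$ is \emph{not} assumed is exactly the point that makes the proposition non-obvious, and your device of choosing $x=a_{\sigma(1)}$, $y=a_1$ and letting the $z_j$ be the common remaining entries is the right way to absorb an arbitrary $\sigma$ in a single step of the recursive definition. The appeals to the inductive hypothesis are legitimate because in each case the two $(n-1)$-forms involved are genuine permutations of one another (their index sets are $\{2,\dots,n\}$ and $\{1,\dots,n\}\setminus\{k\}$ respectively).
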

%
%
%
%

\newpage

\bibliographystyle{plain}
\bibliography{/home/kaique/Dropbox/Textos/one_for_all}


\end{document}